\newtheorem{thm}{Theorem}[section]
\newtheorem{cor}[thm]{Corollary}
\newtheorem{lem}[thm]{Lemma}
\newtheorem{pro}[thm]{Proposition}
\newtheorem{obs}[thm]{Observation}
\newenvironment{ack}{\noindent{\bf Acknowledgments}}
\newcommand{\vol}{{\rm vol}}
\newcommand{\cs}{{\rm cs}}
\newcommand{\li}{{\rm Li}_2}
\newcommand{\imaginary}{{\rm Im}\,}
\newcommand{\modulo}{~~({\rm mod}~\pi^2)}
\newcommand{\modulos}{~~({\rm mod}~4\pi^2)}
\begin{document}

\title{Optimistic limits of colored Jones polynomials and complex volumes of hyperbolic links
\footnote{2000 Mathematics Subject Classification: Primary 57M27; Secondly 51M25, 58J28.}
}
\author{\sc Jinseok Cho}
\maketitle
\begin{abstract}
{The optimistic limit is the mathematical formulation of the classical limit which is a physical method to
expect the actual limit by using saddle point method of certain potential function.
The original optimistic limit of the Kashaev invariant was formulated by Yokota, and 
a modified formulation was suggested by the author and others. 
The modified version was easier to handle and more combinatorial than the original one.}

On the other hand, it was known that the Kashaev invariant coincides with the evaluation of the colored Jones polynomial at the certain root of unity.
{The optimistic limit of the colored Jones polynomial was also formulated by the author and others, but
it was so complicated and needed many unnatural assumptions.

In this article, we suggest a modified optimistic limit of the colored Jones polynomial, following the idea of the modified optimistic limit of the Kashaev invariant,} 
and show that it determines the complex volume of a hyperbolic link.
Furthermore, we show that this optimistic limit coincides with the optimistic limit of the Kashaev invariant modulo $4\pi^2$.
{This new version is easier to handle and more combinatorial than the old version,
and has many advantages than the modified optimistic limit of the Kashaev invariant.
Because of these advantages, several applications have already appeared and more are in preparation now.}

\end{abstract}

\section{Introduction}\label{sec1}

{For a hyperbolic link $L$, the volume conjecture, proposed by Kashaev in \cite{Kashaev97} 
 claims the following nontrivial relation:
   $$2\pi \lim_{N\rightarrow\infty}\frac{\log|\langle L\rangle_N|}{N}=\vol(L)$$
where vol($L$) is the hyperbolic volume of the link complement $\mathbb{S}^3\backslash L$
  and $\langle L\rangle_N$ is the $N$-th Kashaev invariant. This conjecture is interesting because 
  it relates geometric aspects of $L$ with the quantum invariants. Some people believes it is a hint to more deeper connection between
  geometric and quantum topology. (See \cite{McMullen11} for example.)
  After that, the generalized conjecture was proposed in \cite{Murakami02} that
 $$2\pi i\lim_{N\rightarrow\infty}\frac{\log\langle L\rangle_N}{N}\equiv i(\vol(L)+i\,\cs(L))\modulo,$$
 where cs($L$) is the Chern-Simons invariant of $\mathbb{S}^3\backslash L$ defined modulo $\pi^2$ in \cite{Zickert09}.
This conjecture is now called {\it (generalized) volume conjectures} and $\vol(L)+i\,\cs(L)$ is called {\it the complex volume} of $L$.

When the volume conjecture was first proposed in \cite{Kashaev97}, Kashaev considered {\it classical limit} of the Kashaev invariant
and verified his conjecture for three examples. Classical limit is a method of mathematical physics to expect the actual limit by using
saddle point method of certain potential function. Although the behavior of the classical limit looks very amazing, 
it cannot be well-defined due to the ambiguity of the choice of the potential function. Therefore Yokota suggested combinatorial method to
define the potential function at \cite{YokotaPre} and showed that some saddle point of his potential function determines the hyperbolic volume.
This method was first named {\it optimisitic limit} at \cite{Murakami00b} and developed by several authors at \cite{Cho09b} and \cite{Yokota10}.}

Recently, the author together with H. Kim and S. Kim suggested a modified optimistic limit 
of any link diagram at \cite{Cho13a} by using slightly different potential function. 
Comparing with previous definition in \cite{Yokota10}, this new definition was easy to handle and
had natural geometric meaning. (We will summarize the results of \cite{Cho13a} in Section \ref{sec5}.)
Furthermore, the new definition has several applications on the quantum dilogarithm function 
in \cite{Hikami07}, the quandle theory in \cite{Cho14a}
and the cluster algebra in \cite{Hikami13}. (The application to the cluster algebra of \cite{Hikami13} will be discussed in the author's later article.)

On the other hand, Kashaev invariant was proved to be the special value of the colored Jones polynomial in \cite{Murakami01a} as follows:
$$J_L(N;\exp\frac{2\pi i}{N})=\langle L \rangle_N,$$
where $\langle L \rangle_N$ is the $N$-th Kashaev invariant of a link $L$ and 
$J_L(N;x)$ is the $N$-th colored Jones polynomial of $L$ with a complex variable $x$.
The optimistic limit of the colored Jones polynomial, which uses different potential function from the Kashaev invariant version, 
was first proposed in \cite{Ohnuki05}, and developed at \cite{Cho10a} and \cite{Cho13b}. 
However, the general method developed at \cite{Cho13b} was very complicated and needed several unnatural assumptions.
In this article, we will suggest a modified optimistic limit of the colored Jones polynomial using the idea of \cite{Cho13a}. 
This modified definition, which uses slightly different potential function from \cite{Cho13b},
shares the same advantages of the definition in \cite{Cho13a}, 
namely it is easy to handle and has natural geometric meaning.

{Two optimistic limits of the Kashaev invariant and the colored Jones polynomial are almost the same in many ways.
Although they use different potential functions, which are denoted by $V(z_1,\ldots,z_g)$ and $W(w_1,\ldots,w_n)$, respectively,
and slightly different subdivisions of the same octahedral decomposition, the resulting values are the same complex volume.
(The potential function $W(w_1,\ldots,w_n)$ will be defined in Section \ref{sec2}.)
However, due to the difference of the subdivision, the colored Jones polynomial version has a wonderful advantage
that the set of equations 
\begin{equation}\label{defH}
\mathcal{I}:=\left\{\left.\exp\left(w_k\frac{\partial W}{\partial w_k}\right)=1\right|k=1,\ldots,n\right\}.
\end{equation}
always have a solution for any diagram of a hyperbolic link $L$. As a matter of fact, for any given boundary-parabolic representation
$\rho:\pi_1(L)\rightarrow {\rm PSL(2,\mathbb{C})}$ of the link group $\pi_1(L):=\pi_1(\mathbb{S}^3\backslash L)$ and for any link diagram $D$, 
we can construct a solution of $\mathcal{I}$ that induces the representation $\rho$.
(This was proved in one of the author's later article \cite{Cho14c}.) The optimistic limit of the Kashaev invariant has the same property, 
which was proved in \cite{Cho14a}, but some diagram cannot have any solution. See Figure \ref{nosolution} in Section \ref{sec5}, for an example.

The existence of a solution for any link diagram is very useful property because, by using it, we can study the hyperbolic structure
of the link combinatorially. This approach already has several interesting applications, for example, \cite{Cho14c}, \cite{Cho15b} and \cite{Cho15a}, and more applications are in preparation now.

The set of hyperbolicity equations consists of the gluing equations and the completeness condition of certain triangulation.
In the case of $\mathcal{I}$, it is related to an ideal triangulation of $\mathbb{S}^3\backslash (L\cup\{\text{two points}\})$, which will be defined in Section \ref{sec3}.
We name it {\it five-term triangulation} and the two removed points in $\mathbb{S}^3$ will be denoted by $\pm\infty$.
The exact relationship between the five-term triangulation and the set $\mathcal{I}$ is the following proposition.}

\begin{pro}\label{pro1} For a hyperbolic link $L$ with a fixed diagram, consider the potential function $W(w_1,\ldots,w_n)$ of the diagram.
Then the set $\mathcal{I}$ defined in (\ref{defH}) becomes 
the hyperbolicity equations of the five-term triangulation of $\mathbb{S}^3\backslash (L\cup\{\pm\infty\})$.
\end{pro}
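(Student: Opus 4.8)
The plan is to verify the correspondence locally at each crossing and then assemble the equations globally. First I would record the explicit shape of $W$ as built in Section \ref{sec2}: it is a sum, over the crossings of $D$, of dilogarithm terms $\li$ whose arguments are ratios $w_j/w_i$ of the region variables meeting at the crossing, together with quadratic logarithmic correction terms. The one computational input is the dilogarithm identity $\frac{d}{dz}\li(z)=-\frac{\log(1-z)}{z}$, which gives, for a term $\li(w_j/w_i)$, the clean derivatives
\begin{equation}\label{keyderiv}
w_i\frac{\partial}{\partial w_i}\li\!\left(\frac{w_j}{w_i}\right)=\log\!\left(1-\frac{w_j}{w_i}\right),\qquad w_j\frac{\partial}{\partial w_j}\li\!\left(\frac{w_j}{w_i}\right)=-\log\!\left(1-\frac{w_j}{w_i}\right).
\end{equation}
Thus each operator $w_k\frac{\partial}{\partial w_k}$ turns $W$ into a sum of logarithms of the factors $1-w_j/w_i$ and of the ratios $w_j/w_i$ themselves, so that $\exp\bigl(w_k\frac{\partial W}{\partial w_k}\bigr)$ is a Laurent monomial in these shape-type quantities; setting it equal to $1$ produces a purely multiplicative equation and removes any branch ambiguity coming from the logarithms.

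Second, I would set up the dictionary between the factors appearing in \eqref{keyderiv} and the shape parameters of the five-term triangulation of Section \ref{sec3}. At each crossing the octahedron is cut into five ideal tetrahedra, and I would assign to each of them a shape parameter expressed through the four region variables surrounding the crossing; the key point is that the arguments $w_j/w_i$ and the factors $1-w_j/w_i$ occurring in $W$ at that crossing are exactly the shape parameters $z$, $\frac{1}{1-z}$, $\frac{z-1}{z}$ of these tetrahedra, up to the standard cyclic relabeling. This is the analogue, for the five-term subdivision, of the correspondence established for the four-term (octahedral) triangulation in \cite{Cho13a}, and I would prove it by a direct local check on the two crossing types, positive and negative.

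Third comes the global assembly. The variable $w_k$ is attached to a region of $D$, hence to an edge, and for certain variables to a cusp, of the triangulation, and the terms of $W$ involving $w_k$ are precisely those coming from the crossings incident to that region. Collecting the contributions \eqref{keyderiv} from all these crossings, the equation $\exp\bigl(w_k\frac{\partial W}{\partial w_k}\bigr)=1$ becomes the assertion that the product of the corresponding shape parameters around the associated edge equals $1$, which is Thurston's gluing equation for that edge; for the variables sitting at the cusps the same computation returns the completeness condition. Matching signs and the $2\pi i$-normalization is the one place that needs care: one must check that the orientation conventions at positive versus negative crossings, together with the chosen branch of each $\log$, reproduce the standard form of the hyperbolicity equations rather than a shifted version.

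The main obstacle I expect is the bookkeeping forced by the five-term subdivision. Unlike the four-term case, the five tetrahedra at a crossing do not all survive as \emph{visible} tetrahedra of the final complex, and several edges are internal to a single octahedron; one must verify that the extra edges introduced by this subdivision either contribute trivially to the products, their gluing equations being automatically satisfied or combining into a relation already implied by the others, or else are correctly absorbed into the region equations. Establishing this exact one-to-one matching between the $n$ equations of $\mathcal{I}$ and the edge and completeness equations of the five-term triangulation, with no missing or redundant relation, is the technical heart of the proof.
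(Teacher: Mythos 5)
Your first two steps (differentiating the dilogarithms and matching the resulting factors with the shape parameters of the five tetrahedra at each crossing) are exactly the paper's local computation: there one shows that $\exp\bigl(w_a\frac{\partial W_P}{\partial w_a}\bigr)$ equals the product of the shape parameters assigned to the horizontal edge lying in the region of $w_a$, e.g.\ $\exp\bigl(w_j\frac{\partial W_P}{\partial w_j}\bigr)=\bigl(\frac{w_jw_l}{w_kw_m}\bigr)'\bigl(\frac{w_m}{w_j}\bigr)''\bigl(\frac{w_k}{w_j}\bigr)''$, and likewise for $W_N$. But your global step contains a misconception and leaves the decisive verification undone. First, no variable ``sits at a cusp'': every $w_k$ labels a region, and every one of the $n$ equations in $\mathcal{I}$ is the gluing equation of a horizontal-edge class. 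The completeness condition is not among these equations and is not ``returned by the same computation''; in the paper it holds automatically, because the face gluings along a side of the diagram force the cusp cross-section to be an annulus in which the meridian carries the edge $c_rb_r$ to $c_{r+1}b_{r+1}$, i.e.\ the meridian holonomy is a translation independently of the equations.

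Second, the point you correctly call ``the technical heart'' --- that the gluing equations of all edges \emph{not} corresponding to regions are automatically satisfied --- is precisely what a proof must supply, and your proposal stops at stating it. The paper's argument splits those edges into classes: for the diagonals ${\rm A}_r{\rm C}_r$, ${\rm B}_r{\rm D}_r$ of each octahedron and for the remaining edges, the gluing equations hold trivially by the very rule assigning shape parameters (for instance $\frac{w_jw_l}{w_kw_m}$ and $\frac{w_kw_m}{w_jw_l}$ are the parameters on the two diagonals of the same tetrahedron ${\rm A}_r{\rm B}_r{\rm C}_r{\rm D}_r$); for the edges of type ${\rm B}_r{\rm F}_r={\rm D}_r{\rm F}_r$ and ${\rm A}_r{\rm E}_r={\rm C}_r{\rm E}_r$, one checks on the cusp diagram that the three shape parameters meeting such an edge inside one annulus multiply to $\frac{w_b}{w_a}\bigl(\frac{w_b}{w_a}\bigr)''\bigl(\frac{w_b}{w_a}\bigr)'=-1$, and since each such edge is shared by exactly two annuli the total product is $(-1)(-1)=1$. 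Without this computation (or an equivalent one), your argument establishes only that $\mathcal{I}$ coincides with the gluing equations of the horizontal edges, not that $\mathcal{I}$ implies the full set of hyperbolicity equations of the five-term triangulation.
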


We remark that Proposition \ref{pro1} was essentially proved in Section 4 of \cite{Cho13b}. 
{However, the proof in \cite{Cho13b} is very long and complicated, and what we need is only part of it, 
so we will sketch the proof of Proposition \ref{pro1} in Section \ref{sec3} for the reader's convenience.

Note that many parts of this article overlap with the author's previous article \cite{Cho13b}.
However, the major difference is that we are using triangulation of  $\mathbb{S}^3\backslash (L\cup\{\pm\infty\})$,
whereas the previous work used triangulation of $\mathbb{S}^3\backslash L$. Therefore, when we proved some properties
at \cite{Cho13b}, we first considered the general case and then proceeded to special cases 
when certain edges or faces of the triangulation are collapsed to vertices. (There were so many special cases and it required
many unnatural assumptions on link diagrams.) 
However, in this article, the proofs of the general case in \cite{Cho13b} are good enough
and this removes almost all technical difficulties of the previous work. This is why we develop this new version in this article.}

Let $\mathcal{T}=\{(w_1,\ldots,w_n)\}$ be the set of solutions\footnote{
We only consider solutions satisfying the condition that, 
when the potential function is expressed by 
$W(w_1,\ldots,w_n)=\sum\pm\li(w)+\text{(extra terms)}$,
the variables inside the dilogarithms satisfy $w\notin\{0,1,\infty\}$. 
Previously, in \cite{Yokota10} and \cite{Cho13b}, these solutions were called {\it essential solutions}.}
of $\mathcal{I}$ in $\mathbb{C}^n$. {Then, according to the result in \cite{Cho14c}, we know
$\mathcal{T}\neq \emptyset$.\footnote{The article \cite{Cho14c} depends on this article, 
so using $\mathcal{T}\neq \emptyset$ may look illogical. However, the proof of it in \cite{Cho14c} relies only on Proposition \ref{pro1} of this article and it does not require the fact $\mathcal{T}\neq \emptyset$. 
Furthermore, all results in this article still work well if we just {\it assume} $\mathcal{T}\neq \emptyset$.}}
By Theorem 1 of \cite{Tillmann11}, all edges in the five-term triangulation are essential.
(Essential edge roughly means it is not null-homotopic. See \cite{Tillmann11} for the exact definition.)
Therefore, for a solution $\bold{w}\in \mathcal{T}$, 
we can construct a boundary-parabolic representation\footnote{
The solution $\bold{w}\in\mathcal{T}$ satisfies the completeness condition, so $\rho_{\bold{w}}$ is boundary-parabolic.}
\begin{equation}\label{repre}
\rho_{\bold{w}}:\pi_1(\mathbb{S}^3\backslash (L\cup\{\pm\infty\}))=\pi_1(\mathbb{S}^3\backslash L)\longrightarrow{\rm PSL}(2,\mathbb{C}),
\end{equation}
using Yoshida's construction in Section 4.5 of \cite{Tillmann13}.
Note that the volume $\vol(\rho_{\bold{w}})$ and the Chern-Simons invariant $\cs(\rho_{\bold{w}})$
of $\rho_{\bold{w}}$ were defined in \cite{Zickert09}.
We call $\vol(\rho_{\bold{w}})+i\,\cs(\rho_{\bold{w}})$ {\it the complex volume of} $\rho_{\bold{w}}$.

For the solution set $\mathcal{T}\subset\mathbb{C}^n$, let $\mathcal{T}_{j}$ be a path component of $\mathcal{T}$ 
satisfying $\mathcal{T}=\cup_{j\in J}\mathcal{T}_{j}$ for some index set $J$. We assume $0\in J$ for notational convenience. 
To obtain well-defined values from the potential function $W(w_1,\ldots,w_n)$, we slightly modify it  to
\begin{equation}\label{defW_0}
W_0(w_1,\ldots,w_n):=W(w_1,\ldots,w_n)-\sum_{k=1}^n \left(w_k\frac{\partial W}{\partial w_k}\right)\log w_k.
\end{equation}
Then the main result of this article follows.

\begin{thm}\label{thm1}
Let $L$ be a hyperbolic link with a fixed diagram, $W(w_1,\ldots,w_n)$ be the potential function of the diagram {and
$\mathcal{T}=\cup_{j\in J}\mathcal{T}_{j}$ be the solution set of $\mathcal{I}$.}
Then, for any ${\bold w}\in \mathcal{T}_{j}$, $W_0({\bold w})$ is constant (depends only on $j$) and
\begin{equation}\label{W1}
W_0(\bold{w})\equiv i\,(\vol(\rho_{\bold w})+i\,\cs(\rho_{\bold w}))\modulo,
\end{equation}
where $\rho_{\bold w}$ is the boundary-parabolic representation obtained in (\ref{repre}).
Furthermore, there exists a path component $\mathcal{T}_{0}$ of $\mathcal{T}$ satisfying
\begin{equation}\label{W2}
W_0(\bold{w_{\infty}})\equiv i\,(\vol(L)+i\,\cs(L))\modulo,
\end{equation}
for all ${\bold w}_{\infty}\in \mathcal{T}_{0}$.
\end{thm}

The proof will be given in Section \ref{sec4}. The main idea is to use
Zickert's formula of the extended Bloch group in \cite{Zickert09}.
Although this idea was already used in \cite{Cho13a} and several others,
this proof had not appeared anywhere before. 

We call the value $W_0(\bold{w})$ {\it the optimistic limit of the colored Jones polynomial}. Note that
it depends on the choice of the diagram and the path component $\mathcal{T}_{j}$.

{ The optimistic limit of the Kashaev invariant, defined in \cite{Cho13a}, will be surveyed in Section \ref{sec5}. 
The potential function $V(z_1,\ldots,z_g)$ is defined from the diagram $D$ of the hyperbolic link $L$,
and the set of equations
\begin{equation*}
\mathcal{H}:=\left\{\left.\exp\left(z_k\frac{\partial V}{\partial z_k}\right)=1\right|k=1,\ldots,g\right\}.
\end{equation*}
becomes the hyperbolicity equation of the four-term triangulation. Four-term triangulation is obtained from the same octahedron
of the five-term triangulation by subdivding it into four tetrahedra. Therefore, four-term triangulation is a triangulation
of $\mathbb{S}^3\backslash (L\cup\{\pm\infty\})$.

Although both sets of hyperbolicity equations $\mathcal{I}$ and $\mathcal{H}$ are based on the same octahedron decomposition
of $\mathbb{S}^3\backslash (L\cup\{\pm\infty\})$, these two sets are quite different. The variables $w_1,\ldots,w_n$ of $\mathcal{I}$
are assigned to regions of the link diagram $D$, but $z_1,\ldots,z_g$ of $\mathcal{H}$ are assigned to sides of $D$. (See Figure \ref{label}.)
Furthermore, the equations in $\mathcal{I}$ are all gluing equations and they induces the completeness condition, whereas
the equations in $\mathcal{H}$ are all the completeness conditions along the meridian and they induces the gluing equations.
The author feels these two definitions of the optimistic limits seem to be dual to each other.}

\begin{figure}[h]
\centering
    \subfigure[Positive crossing]{\setlength{\unitlength}{0.5cm}
  \begin{picture}(8,5.5)\thicklines
    \put(6,5){\vector(-1,-1){4}}
    \put(2,5){\line(1,-1){1.8}}
    \put(4.2,2.8){\vector(1,-1){1.8}}
    \put(3.5,1){$w_j$}
    \put(6,2.5){$w_k$}
    \put(3.5,4.5){$w_l$}
    \put(1,2.5){$w_m$}
    \put(1,5.3){$z_d$}
    \put(6,5.3){$z_c$}
    \put(1,0.2){$z_a$}
    \put(6,0.2){$z_b$}
  \end{picture}}\hspace{2cm}
    \subfigure[Negative crossing]{\setlength{\unitlength}{0.5cm}
    \begin{picture}(8,5.5)\thicklines
    \put(2,5){\vector(1,-1){4}}
   \put(6,5){\line(-1,-1){1.8}}
   \put(3.8,2.8){\vector(-1,-1){1.8}}
    \put(3.5,1){$w_j$}
    \put(6,2.5){$w_k$}
    \put(3.5,4.5){$w_l$}
    \put(1,2.5){$w_m$}
    \put(1,5.3){$z_d$}
    \put(6,5.3){$z_c$}
    \put(1,0.2){$z_a$}
    \put(6,0.2){$z_b$}
  \end{picture}}\caption{Assignment of variables}\label{label}
\end{figure}
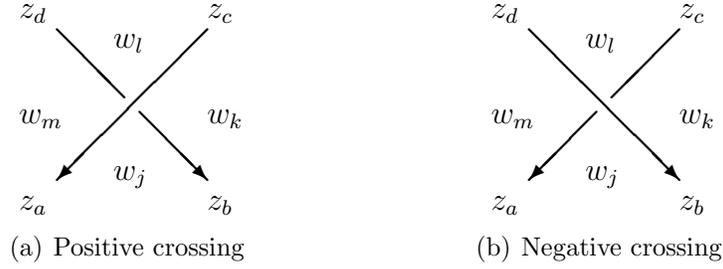

Let $\mathcal{S}=\{(z_1,\ldots,z_g)\}$ be the set of solutions of $\mathcal{H}$ in $\mathbb{C}^g$. 
Then, for a solution $\bold{z}\in \mathcal{S}$, 
we can obtain a boundary-parabolic representation
\begin{equation*}
\rho_{\bold{z}}:\pi_1(\mathbb{S}^3\backslash (L\cup\{\pm\infty\}))
=\pi_1(\mathbb{S}^3\backslash L)\longrightarrow{\rm PSL}(2,\mathbb{C}).
\end{equation*}
Now we modify the potential function $V$ to 
\begin{equation*}
V_0(z_1,\ldots,z_g):=V(z_1,\ldots,z_g)-\sum_{k=1}^g \left(z_k\frac{\partial V}{\partial z_k}\right)\log z_k.
\end{equation*}
Then the main result of \cite{Cho13a} can be summarized to the following identity:
\begin{equation}\label{V}
V_0(\bold{z})\equiv i\,(\vol(\rho_{\bold z})+i\,\cs(\rho_{\bold z}))\modulo.
\end{equation}
From (\ref{W1}) and (\ref{V}), we can easily see that, if 
$\rho_{\bold w}=\rho_{\bold z}$, then
\begin{equation}\label{coin}
W_0(\bold{w})\equiv V_0(\bold{z})\modulo.
\end{equation} 
This is formulated in stronger form as following:

\begin{thm}\label{thm2}
Assume the diagram $D$ of the hyperbolic link $L$ does not have a kink.
For a solution $\bold{w}\in\mathcal{T}$, 
if the variables $w_j,\ldots,w_m$ in Figure \ref{label} satisfy
\begin{equation*}
w_j+w_l\neq w_k+w_m
\end{equation*}
at all crossings, then there exists a solution $\bold{z}\in\mathcal{S}$ satisfying
\begin{equation}\label{coin2}
\rho_{\bold w}=\rho_{\bold z}\text{ and }W_0(\bold{w})\equiv V_0(\bold{z})\modulos.
\end{equation}
Inversely, for a solution $\bold{z}\in\mathcal{S}$, 
{ there always} exists a solution $\bold{w}\in\mathcal{T}$ satisfying (\ref{coin2}).
\end{thm}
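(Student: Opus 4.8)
The plan is to realize the two triangulations as related by Pachner moves and then to compare the two potential functions directly, rather than merely invoking (\ref{W1}) and (\ref{V}). First I would make the correspondence between the region variables $w_1,\ldots,w_n$ and the side variables $z_1,\ldots,z_g$ explicit. Since the five-term and four-term triangulations are two subdivisions of the \emph{same} octahedral decomposition of $\mathbb{S}^3\backslash(L\cup\{\pm\infty\})$, at each crossing the five tetrahedra of the five-term triangulation and the four tetrahedra of the four-term triangulation fill the same ideal octahedron. I would show that they differ by a single $2$--$3$ Pachner move localized at each crossing (five tetrahedra versus four accounts for exactly one extra tetrahedron per crossing), and write down the shape parameters of both families in terms of the local labels $w_j,w_k,w_l,w_m$ and $z_a,z_b,z_c,z_d$ of Figure \ref{label}. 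The assumption that $D$ has no kink is used here to guarantee that the local picture at every crossing is the generic one of Figure \ref{label}, so that these formulas hold uniformly.

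Next I would dispose of the non-degeneracy and the representation. The hypothesis $w_j+w_l\neq w_k+w_m$ is exactly the condition that the tetrahedron created by the $2$--$3$ move is non-flat, so starting from $\bold{w}\in\mathcal{T}$ the move produces shape parameters satisfying the four-term gluing and completeness equations $\mathcal{H}$, i.e. an essential solution $\bold{z}\in\mathcal{S}$; symmetrically, $z_a\neq z_c$ and $z_b\neq z_d$ guarantee that the inverse $3$--$2$ move can be performed on $\bold{z}\in\mathcal{S}$ to produce $\bold{w}\in\mathcal{T}$. In either direction Pachner moves preserve the underlying manifold and the shape parameters match across the move, so Yoshida's construction yields the same developing map; this gives $\rho_{\bold w}=\rho_{\bold z}$, the first assertion of (\ref{coin2}).

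It remains to prove the congruence $W_0(\bold{w})\equiv V_0(\bold{z})\modulos$. Reduced modulo $\pi^2$ this is already contained in (\ref{coin}), so the real content is the sharpening from $\pi^2$ to $4\pi^2$. For this I would work in Neumann's extended Bloch group $\widehat{\mathcal{B}}(\mathbb{C})$: the branches of the logarithms recorded in $W_0$ and in $V_0$ equip the five-term and four-term triangulations with flattenings, and (as in the proof of Theorem \ref{thm1}) the resulting classes evaluate under the Rogers dilogarithm to $W_0(\bold{w})$ and $V_0(\bold{z})$ respectively, now well defined modulo $4\pi^2$. The $2$--$3$ move corresponds on the nose to the lifted five-term relation in $\widehat{\mathcal{B}}(\mathbb{C})$, so the two flattened classes coincide and the two Rogers dilogarithm values agree modulo $4\pi^2$. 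The main obstacle is precisely this last bookkeeping: the lifted five-term relation holds only when the flattenings on the two sides are chosen compatibly, so I must check that the explicit change of variables between $\bold{w}$ and $\bold{z}$ carries the logarithm data of the five-term triangulation exactly onto that of the four-term triangulation, with no spurious multiple of $2\pi i$. Verifying this crossing-by-crossing, using the non-degeneracy hypotheses to keep every shape parameter away from $\{0,1,\infty\}$, is where the delicate part of the argument lies.
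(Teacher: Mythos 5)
Your overall skeleton (both triangulations subdivide the same octahedra; translate shape parameters across the subdivision; the non-degeneracy hypotheses keep all parameters away from $\{0,1,\infty\}$; the common octahedral decomposition gives $\rho_{\bold w}=\rho_{\bold z}$) matches the paper's Lemma \ref{lem61}, but two of your concrete mechanisms fail. First, the combinatorial claim is false: the four-term subdivision $\{{\rm A}_r{\rm B}_r{\rm E}_r{\rm F}_r,\ {\rm B}_r{\rm C}_r{\rm E}_r{\rm F}_r,\ {\rm C}_r{\rm D}_r{\rm E}_r{\rm F}_r,\ {\rm D}_r{\rm A}_r{\rm E}_r{\rm F}_r\}$ and the five-term subdivision $\{{\rm A}_r{\rm B}_r{\rm D}_r{\rm F}_r,\ {\rm B}_r{\rm C}_r{\rm D}_r{\rm F}_r,\ {\rm A}_r{\rm B}_r{\rm C}_r{\rm D}_r,\ {\rm A}_r{\rm B}_r{\rm C}_r{\rm E}_r,\ {\rm A}_r{\rm C}_r{\rm D}_r{\rm E}_r\}$ have \emph{no} tetrahedron in common, whereas a single $2$--$3$ move replaces exactly two tetrahedra by three and leaves all others untouched; so the two subdivisions cannot differ by one such move, and any Pachner route between them needs several moves (with attendant degeneracy issues). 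The paper never invokes Pachner moves: it treats both families as parametrizations of the same ideal octahedron and writes the explicit transition (\ref{54term}), hence (\ref{t1})--(\ref{u2}). Your non-degeneracy intuition does survive in that form: for instance
$$\left(\frac{w_m}{w_j}\right)''\left(\frac{w_k}{w_j}\right)''\left(\frac{w_j w_l}{w_k w_m}\right)'=\frac{(w_m-w_j)(w_k-w_j)}{w_k w_m-w_j w_l}$$
equals $1$ exactly when $w_j+w_l=w_k+w_m$, which is how that hypothesis is used in Lemma \ref{lem61}.

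Second, and more seriously, the congruence modulo $4\pi^2$ is not a local statement, and the extended Bloch group cannot deliver it. The identification of $W_0({\bold w})$ and $V_0({\bold z})$ with evaluations of $\widehat{L}$ on flattened classes --- (\ref{formula}) and (\ref{goal1}) in the proof of Theorem \ref{thm1} --- holds only modulo $\pi^2$, and $\widehat{L}$ is only well defined modulo $\pi^2$ on classes subject to the lifted five-term relation; so ``the two flattened classes coincide'' yields at best (\ref{coin}), which is strictly weaker than Theorem \ref{thm2}. The paper instead proves an honest analytic identity per octahedron, Lemma \ref{lem62}, valid for every branch choice modulo $4\pi^2$, and this identity does \emph{not} say the two local potentials agree: it leaves the residual terms of (\ref{eq44}) and (\ref{eq45}),
$$-(\log w_j-\log w_m)\log z_a-(\log w_k-\log w_j)\log z_b+(\log w_k-\log w_l)\log z_c+(\log w_l-\log w_m)\log z_d,$$
which are nonzero at each crossing and cancel only when summed over the whole diagram, because each side variable $z_a$ picks up this contribution with opposite signs at the two crossings it joins (Figure \ref{final}). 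Your proposal has no counterpart of this global cancellation over the sides of $D$; checking ``flattening compatibility crossing-by-crossing,'' however carefully, would not make these terms disappear, so the genuinely delicate step is not where you located it.
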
  

The proof of Theorem \ref{thm2} was essentially appeared in \cite{Cho13b}.
{However, it is based on very long and complicated calculations, and what we need here is only some parts of them.
So we will sketch the proof of Theorem \ref{thm2} in Section \ref{sec6} for the reader's convenience.}

In Section \ref{sec7}, we will apply Theorem \ref{thm2} to the example of twist knots and show several numerical calculations.
Finally, in Appendix, we will discuss {the invariance of the optimistic limit under the change of signs on the variables of the potential function. 
This property will be used in author's later article.}

\section{Potential function $W(w_1,\ldots,w_n)$}\label{sec2}

Consider a hyperbolic link $L$ and its oriented diagram $D$.
We define {\it sides} of $D$ by the arcs connecting two adjacent crossing points.\footnote{
Most people use the word {\it edge} instead of {\it side} here.
However, in this paper, we want to keep the word {\it edge} for the edge of a tetrahedron.}
For example, the diagram of the figure-eight knot $4_1$ in Figure \ref{4_1} has 8 sides.
Also we define {\it regions} of $D$ by regions surrounded by sides.
For example, the diagram in Figure \ref{4_1} has 6 regions.

\begin{figure}[h]
\centering
  \includegraphics[scale=0.7]{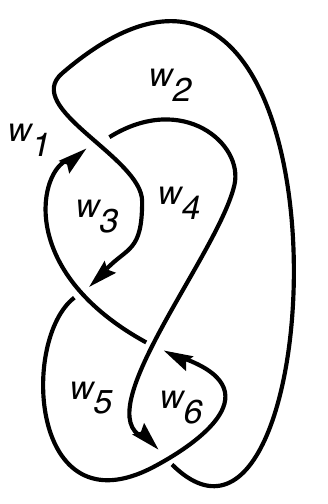}
  \caption{The figure-eight knot $4_1$}\label{4_1}
\end{figure}

We assign complex variables $w_1,\ldots,w_n$ to each region of the diagram $D$.
Using the dilogarithm function $\li(w)=-\int_0^w \frac{\log(1-t)}{t}dt$,
we define the potential function\footnote{
Note that, by using {$\approx$ to denote the equivalence relation }in Lemma 3.1 of \cite{Cho13b}, we know
$$\log\frac{w_j}{w_m}\log\frac{w_j}{w_k}\approx(\log w_j-\log w_m)(\log w_j-\log w_k)
\approx\log\frac{w_m}{w_j}\log\frac{w_k}{w_j}.$$
Therefore, changing $\log\frac{w_j}{w_m}\log\frac{w_j}{w_k}$ of $W_N$ to $\log\frac{w_m}{w_j}\log\frac{w_k}{w_j}$
does not have any effect on $\mathcal{I}$ and the optimistic limit. To avoid redundant calculation,
we will use $\log\frac{w_j}{w_m}\log\frac{w_j}{w_k}$ 
up to Section \ref{sec4} and change it to $\log\frac{w_m}{w_j}\log\frac{w_k}{w_j}$ in Section \ref{sec6}.} of a crossing as in Figure \ref{pic2}.

\begin{figure}[h]
\setlength{\unitlength}{0.4cm}
\subfigure[Positive crossing]{
  \begin{picture}(35,6)\thicklines
    \put(6,5){\vector(-1,-1){4}}
    \put(2,5){\line(1,-1){1.8}}
    \put(4.2,2.8){\vector(1,-1){1.8}}
    \put(3.5,1){$w_j$}
    \put(5.5,3){$w_k$}
    \put(3.5,4.5){$w_l$}
    \put(1.5,3){$w_m$}
    \put(8,3){$\longrightarrow$}
    \put(11,4){$W_P:=-\li(\frac{w_l}{w_m})-\li(\frac{w_l}{w_k})+\li(\frac{w_j w_l}{w_k w_m})+\li(\frac{w_m}{w_j})+\li(\frac{w_k}{w_j})$}
    \put(15,2){$-\frac{\pi^2}{6}+\log\frac{w_m}{w_j}\log\frac{w_k}{w_j}$}
  \end{picture}}\\
\subfigure[Negative crossing]{
  \begin{picture}(35,6)\thicklines
    \put(2,5){\vector(1,-1){4}}
    \put(6,5){\line(-1,-1){1.8}}
    \put(3.8,2.8){\vector(-1,-1){1.8}}
    \put(3.5,1){$w_j$}
    \put(5.5,3){$w_k$}
    \put(3.5,4.5){$w_l$}
    \put(1.5,3){$w_m$}
    \put(8,3){$\longrightarrow$}
    \put(11,4){$W_N:=\li(\frac{w_l}{w_m})+\li(\frac{w_l}{w_k})-\li(\frac{w_j w_l}{w_k w_m})-\li(\frac{w_m}{w_j})-\li(\frac{w_k}{w_j})$}
    \put(15,2){$+\frac{\pi^2}{6}-\log\frac{w_m}{w_j}\log\frac{w_k}{w_j}$}
  \end{picture}}
  \caption{Potential functions of the crossings}\label{pic2}
\end{figure}
Note that this potential function comes from the formal substitution of the R-matrix of the colored Jones polynomial.
Refer \cite{Cho13b} for details.

{\it The potential function} $W(w_1,\ldots,w_n)$ of the diagram $D$ is defined by the summation of
all potential functions of the crossings. For example, the potential function of the figure-eight knot $4_1$
in Figure \ref{4_1} is
\begin{eqnarray*}
\lefteqn{W(w_1,\ldots,w_6)}\\
&&=\left\{-\li(\frac{w_1}{w_3})-\li(\frac{w_1}{w_2})+\li(\frac{w_1 w_4}{w_2 w_3})+\li(\frac{w_3}{w_4})+\li(\frac{w_2}{w_4})
-\frac{\pi^2}{6}+\log\frac{w_3}{w_4}\log\frac{w_2}{w_4}\right\}\\
&&+\left\{-\li(\frac{w_4}{w_3})-\li(\frac{w_4}{w_5})+\li(\frac{w_1 w_4}{w_3 w_5})+\li(\frac{w_3}{w_1})+\li(\frac{w_5}{w_1})
-\frac{\pi^2}{6}+\log\frac{w_3}{w_1}\log\frac{w_5}{w_1}\right\}\\
&&+\left\{\li(\frac{w_2}{w_4})+\li(\frac{w_2}{w_6})-\li(\frac{w_2 w_5}{w_4 w_6})-\li(\frac{w_4}{w_5})-\li(\frac{w_6}{w_5})
+\frac{\pi^2}{6}-\log\frac{w_4}{w_5}\log\frac{w_6}{w_5}\right\}\\
&&+\left\{\li(\frac{w_5}{w_1})+\li(\frac{w_5}{w_6})-\li(\frac{w_2 w_5}{w_1 w_6})-\li(\frac{w_1}{w_2})-\li(\frac{w_6}{w_2})
+\frac{\pi^2}{6}-\log\frac{w_1}{w_2}\log\frac{w_6}{w_2}\right\}.
\end{eqnarray*}

We define a modified potential function $W_0(w_1,\ldots,w_n)$ as  in (\ref{defW_0}). 
Recall that $\mathcal{I}$ was defined in (\ref{defH}). 
Also recall that we are considering the solutions ${\bold w}=(w_1,\ldots,w_n)\in\mathbb{C}^n$ of $\mathcal{I}$ with the property
that if the potential function is expressed by 
$W(w_1,\ldots,w_n)=\sum\pm\li(w)+\text{(extra terms)}$,
then variables inside the dilogarithms satisfy $w\notin\{0,1,\infty\}$.

Note that the functions $\li(w)$ and $\log w$ are multi-valued functions. Therefore, to obtain well-defined values,
we have to select proper branch of the logarithm by choosing $\arg w$ and $\arg(1-w)$.
The following lemma, which corresponds to Lemma 2.1 of \cite{Cho13a},
 shows why we consider the potential function $W_0$ instead of $W$.

\begin{lem}\label{branch}
Let $\bold{w}=(w_1,\ldots,w_n)\in\mathcal{T}$. 
For the potential function $W(w_1,\ldots,w_n)$, the value of $W_0(\bold{w})$ 
is invariant under any choice of branch of the logarithm modulo $4\pi^2$.
\end{lem}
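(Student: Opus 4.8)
The plan is to reduce the statement to invariance of $W_0$ under a short list of elementary branch changes and then verify each one by a direct computation, the point being that the correction term $\sum_k\left(w_k\frac{\partial W}{\partial w_k}\right)\log w_k$ is engineered precisely to absorb these changes up to multiples of $4\pi^2$.

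First I would fix notation by writing $W=\sum_j \sigma_j\li(z_j)+Q+C$, where each $z_j$ is the monomial in $w_1,\dots,w_n$ occurring as a dilogarithm argument, $\sigma_j=\pm1$, $Q$ is the sum of the $\log$-product terms (a homogeneous quadratic form in $\log w_1,\dots,\log w_n$ with symmetric integer Hessian $H$, whose diagonal entries are even since each crossing contributes $\pm2$ to them), and $C$ collects the constants $\pm\frac{\pi^2}{6}$. A choice of branch amounts to choosing $\arg w_k$ for each $k$ together with, for each dilogarithm, the branches of $\log z_j$ and $\log(1-z_j)$, equivalently the branch of $\li(z_j)$. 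Any two such choices are joined by finitely many elementary moves: (A) $\log w_k\mapsto\log w_k+2\pi i$ for a single $k$; and (B) the monodromy of one dilogarithm, namely $\li(z_j)\mapsto\li(z_j)-2\pi i\log z_j$ together with $\log(1-z_j)\mapsto\log(1-z_j)+2\pi i$ around $z_j=1$, or $\log z_j\mapsto\log z_j+2\pi i$ with $\li(z_j)$ fixed around $z_j=0$. Since $4\pi^2\mathbb{Z}$ is a group and the effects of successive moves add, it suffices to show each elementary move alters $W_0$ only by an element of $4\pi^2\mathbb{Z}$.

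The key computation records $F_k:=w_k\frac{\partial W}{\partial w_k}=F_k^Q+F_k^L$, where $F_k^Q=\sum_l H_{kl}\log w_l$ comes from $Q$ and $F_k^L=-\sum_j\sigma_j e_{jk}\log(1-z_j)$ comes from the dilogarithms ($e_{jk}$ being the exponent of $w_k$ in $z_j$), and uses the defining property of $\mathcal{T}$, that $F_k\in 2\pi i\mathbb{Z}$. For move (A) at index $k_0$ the dilogarithm terms and $C$ are untouched, so $\Delta W=\Delta Q=2\pi i\,F_{k_0}^Q+(\text{integer})\cdot(2\pi i)^2$, and the quadratic self-term lies in $4\pi^2\mathbb{Z}$ by the evenness of the diagonal of $H$. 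In $-\sum_k F_k\log w_k$, the explicit shift of $\log w_{k_0}$ contributes $-2\pi i\,F_{k_0}$, which lies in $4\pi^2\mathbb{Z}$ precisely because $F_{k_0}\in 2\pi i\mathbb{Z}$, while the induced change in the $F_k^Q$ contributes $-2\pi i\sum_k H_{kk_0}\log w_k=-2\pi i\,F_{k_0}^Q$ by symmetry of $H$; these cancel $\Delta Q$ modulo $4\pi^2$, so $\Delta W_0\equiv 0$. For move (B) around $z_{j_0}=1$ one gets $\Delta W=-2\pi i\,\sigma_{j_0}\log z_{j_0}$ from the dilogarithm, while only $F_k^L$ changes in the correction term, producing $+2\pi i\,\sigma_{j_0}\sum_k e_{j_0 k}\log w_k=2\pi i\,\sigma_{j_0}\log z_{j_0}$, an exact cancellation; the move around $z_{j_0}=0$ changes nothing, since $\log z_{j_0}$ enters $W_0$ only through the (now unaltered) $\li(z_{j_0})$.

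I expect the main obstacle to be the careful bookkeeping of the dilogarithm monodromy: fixing the correct signs and constants in the transformation of $\li(z_j)$ and matching them against the $\log(1-z_j)$-dependence of $F_k$, together with the verification that the Hessian piece of the correction term is exactly $F_{k_0}^Q$. It is also worth pinpointing where the modulus $4\pi^2$, rather than exact invariance, is forced: it comes solely from the cross-term $-2\pi i\,F_{k_0}$ and the quadratic self-terms, each a product of two factors of $2\pi i$, so that the hypothesis $\mathbf{w}\in\mathcal{T}$ (hence $F_{k_0}\in 2\pi i\mathbb{Z}$), and the evenness of the diagonal of $H$, are exactly what push them into $4\pi^2\mathbb{Z}$.
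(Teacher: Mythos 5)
Your proof is correct and takes essentially the same route as the paper's: the paper's proof consists of the congruence (\ref{pflem21}) --- asserting that a change of log-branch alters $W_0(\bold{w})$ only by $\sum_{k}\left(w_k\frac{\partial W}{\partial w_k}\right)\left(\log w_k-\log^{*}w_k\right)$ modulo $4\pi^2$, which it cites from the idea of Lemma 2.1 of \cite{Cho13a} --- followed by the remark that each factor $w_k\frac{\partial W}{\partial w_k}$ lies in $2\pi i\,\mathbb{Z}$ because $\bold{w}\in\mathcal{T}$. Your elementary-move analysis (move (A) leaving the residual $-2\pi i\,F_{k_0}$ plus even-diagonal Hessian terms, move (B) cancelling against the correction term) is precisely a self-contained verification of that cited congruence, and your concluding use of the solution condition is identical to the paper's.
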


\begin{proof}
Note that it was almost proved in Lemma 2.1 of \cite{Cho13a}. 
Using the idea in \cite{Cho13a}, we can write down
\begin{equation}\label{pflem21}
W^*_0 (\bold{w})- W_0(\bold{w})\equiv
\sum_{k=1}^n \left\{-\left(w_k\frac{\partial W}{\partial w_k}\right)\log^* w_k
+\left(w_k\frac{\partial W}{\partial w_k}\right)\log w_k\right\}\modulos,
\end{equation}
where $W^*(\bold{w})$ and $\log^* w$ are the functions with different log-branch 
corresponding to an analytic continuation of $W(\bold{w})$ and $\log w$, respectively. 
We already assumed $\bold{w}=(w_1,\ldots,w_n)\in\mathcal{T}$, so we obtain
$$\left(w_k\frac{\partial W}{\partial w_k}\right)\log^* w_k
\equiv\left(w_k\frac{\partial W}{\partial w_k}\right)\log w_k\modulos,$$
and (\ref{pflem21}) is zero modulo $4\pi^2$.

\end{proof}

The following lemma and corollary were already appeared in \cite{Cho13a}
and proved as Lemma 2.2 and Corollary 2.3, respectively. 
{(Substituting $V$, $V_0$, $\mathcal{H}$, $\mathcal{S}_j$ and $z_k$ in the proof of \cite {Cho13a} to 
$W$, $W_0$, $\mathcal{I}$, $\mathcal{T}_j$ and $w_k$, respectively, gives proof.)}

\begin{lem}\label{lem1}
Let $\mathcal{T}=\cup_{j\in J}\mathcal{T}_j\subset\mathbb{C}^n$ be the solution set of $\mathcal{I}$ with $\mathcal{T}_j$ being a path component.
Assume $\mathcal{T}\neq\emptyset$.
Then, for any ${\bold w}=(w_1,\ldots,w_n)\in \mathcal{T}_j$,
$$W_0({\bold w})\equiv C_j\modulos,$$
where $C_j$ is a complex constant depending only on $j\in J$.
\end{lem}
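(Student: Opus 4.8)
The plan is to show that the restriction of $W_0$ to each path component $\mathcal{T}_j$ has vanishing differential, so that $W_0$ is locally constant there; path-connectedness then forces it to be a genuine constant, and Lemma \ref{branch} guarantees this constant is well defined modulo $4\pi^2$.

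First I would compute the total differential of $W_0$. Writing $f_k:=w_k\frac{\partial W}{\partial w_k}$, so that $W_0=W-\sum_{k=1}^n f_k\log w_k$, a direct calculation using $dW=\sum_k\frac{\partial W}{\partial w_k}\,dw_k$ and $d\log w_k=dw_k/w_k$ gives
\begin{equation*}
dW_0=dW-\sum_{k=1}^n\left(\log w_k\,df_k+f_k\,\frac{dw_k}{w_k}\right)=-\sum_{k=1}^n\log w_k\,d\!\left(w_k\frac{\partial W}{\partial w_k}\right),
\end{equation*}
because $f_k\,dw_k/w_k=\frac{\partial W}{\partial w_k}\,dw_k$, so the sum of these terms is exactly $dW$ and cancels the leading $dW$. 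The point of the modification (\ref{defW_0}) is precisely to engineer this cancellation.

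Next I would invoke the defining equations of $\mathcal{I}$. On $\mathcal{T}$ every equation $\exp(w_k\partial W/\partial w_k)=1$ holds, so each $f_k$ lies in $2\pi i\,\mathbb{Z}$. Along a fixed path component $\mathcal{T}_j$ the integer $f_k/(2\pi i)$ varies continuously, hence is constant, so $f_k$ is constant on $\mathcal{T}_j$. Consequently, for any vector $v$ tangent to $\mathcal{T}_j$ we have $df_k(v)=0$ for every $k$, and therefore $dW_0$ vanishes on all tangent directions to $\mathcal{T}_j$. Thus $W_0$ is locally constant on $\mathcal{T}_j$, and since $\mathcal{T}_j$ is path-connected it equals a single value $C_j$ once a continuous branch of each $\log w_k$ has been fixed along the component.

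The main subtlety will be the multivaluedness of the logarithms, which is exactly why the conclusion is stated only modulo $4\pi^2$. A priori, continuing the chosen branches around a loop in $\mathcal{T}_j$ might return a different value of $W_0$; but this ambiguity is controlled by Lemma \ref{branch}, which shows that any change of branch alters $W_0(\bold{w})$ only by a multiple of $4\pi^2$. Hence the locally constant value of $W_0$ on $\mathcal{T}_j$ is well defined modulo $4\pi^2$, yielding the constant $C_j$ depending only on $j$. The differential computation itself is routine; this branch bookkeeping is the only point that requires genuine care.
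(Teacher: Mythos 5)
Your proof is correct and is essentially the paper's own argument: the paper defers this lemma to Lemma 2.2 of \cite{Cho13a}, whose proof is exactly your computation that
$dW_0=-\sum_{k}\log w_k\,d\left(w_k\frac{\partial W}{\partial w_k}\right)$
vanishes along $\mathcal{T}_j$ because each $w_k\frac{\partial W}{\partial w_k}$ is a continuous $2\pi i\mathbb{Z}$-valued function, hence constant on a path component. Your use of Lemma \ref{branch} to absorb the branch ambiguity modulo $4\pi^2$ likewise matches the intended bookkeeping, so nothing is missing.
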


\begin{cor}\label{lem23}
If ${\bold w}=(w_1,\ldots,w_n) \in \mathcal{T}_j$, 
then $$\lambda{\bold w}:=(\lambda w_1,\ldots,\lambda w_n) \in \mathcal{T}_j$$
for any nonzero complex number $\lambda$. Furthermore,
$$W_0({\bold w})\equiv W_0(\lambda{\bold w})\modulos.$$
\end{cor}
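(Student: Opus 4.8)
The plan is to exploit the fact that the potential function $W$ is homogeneous of degree zero in the variables $w_1,\ldots,w_n$. Indeed, every dilogarithm appearing in $W$ has as its argument a ratio of monomials of equal total degree (such as $w_l/w_m$ or $w_jw_l/(w_kw_m)$), and every logarithmic summand is a product of logarithms of such ratios; the only remaining terms are the constants $\pm\pi^2/6$. Hence each argument is invariant under the substitution $w_i\mapsto\lambda w_i$, and for any fixed choice of branches we have $W(\lambda\bold{w})=W(\bold{w})$ identically. This single structural observation drives the whole argument.

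First I would record the consequences of this homogeneity for the equations $\mathcal{I}$. Writing $g_k:=w_k\frac{\partial W}{\partial w_k}$, differentiation of the identity $W(\lambda\bold{w})=W(\bold{w})$ shows that each $g_k$ is again homogeneous of degree zero, so $g_k(\lambda\bold{w})=g_k(\bold{w})$ for every nonzero $\lambda$. Consequently $\exp(g_k(\lambda\bold{w}))=\exp(g_k(\bold{w}))=1$, so $\lambda\bold{w}$ again satisfies $\mathcal{I}$. Moreover, since the dilogarithm arguments are unchanged under scaling, the essential condition ($w\notin\{0,1,\infty\}$ inside each dilogarithm) is preserved. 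Therefore $\lambda\bold{w}\in\mathcal{T}$ for every $\lambda\in\mathbb{C}\setminus\{0\}$; that is, $\mathcal{T}$ is invariant under the scaling action of $\mathbb{C}\setminus\{0\}$.

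Next I would check that $\lambda\bold{w}$ lies in the same path component $\mathcal{T}_j$ as $\bold{w}$. Choosing any path $\gamma:[0,1]\to\mathbb{C}\setminus\{0\}$ with $\gamma(0)=1$ and $\gamma(1)=\lambda$ (possible because $\mathbb{C}\setminus\{0\}$ is path-connected), the map $t\mapsto\gamma(t)\bold{w}$ is a continuous path that, by the previous paragraph, stays inside $\mathcal{T}$ and joins $\bold{w}$ to $\lambda\bold{w}$. Hence $\lambda\bold{w}\in\mathcal{T}_j$, which is the first assertion of the corollary.

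Finally, the congruence $W_0(\bold{w})\equiv W_0(\lambda\bold{w})\modulos$ is immediate from Lemma \ref{lem1}, since $W_0$ is constant modulo $4\pi^2$ on the path component $\mathcal{T}_j$ and both points lie in $\mathcal{T}_j$. Alternatively, one can verify it directly: expanding $W_0(\lambda\bold{w})$ and using $\log(\lambda w_k)=\log\lambda+\log w_k$ gives $W_0(\lambda\bold{w})=W_0(\bold{w})-(\log\lambda)\sum_k g_k(\bold{w})$, and Euler's identity for the degree-zero homogeneous function $W$ forces $\sum_k g_k(\bold{w})=0$, so the correction term vanishes. The only point requiring care is the branch ambiguity created by $\log\lambda$ and by the various logarithms; this is exactly what Lemma \ref{branch} controls, guaranteeing that the value of $W_0$ is well defined modulo $4\pi^2$ independently of the branch choices. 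I expect this bookkeeping of branches to be the only genuinely delicate step, the homogeneity itself being a direct structural observation.
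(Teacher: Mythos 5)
Your proof is correct. The paper itself skips the proof of this corollary (deferring to Corollary 2.3 of the cited reference), and your argument---degree-zero homogeneity of $W$ (all dilogarithm and logarithm arguments are ratios), hence scaling invariance of $\mathcal{T}$, then path-connectedness of $\mathbb{C}\setminus\{0\}$ to stay in $\mathcal{T}_j$, then Lemma \ref{lem1}---is exactly the intended standard argument; your alternative direct check via Euler's identity $\sum_k w_k\frac{\partial W}{\partial w_k}=0$ is also consistent with equation (\ref{eq20}) of the paper.
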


Due to Corollary \ref{lem23}, we can consider the solution set $\mathcal{T}$ 
as a subset of $\mathbb{CP}^{n-1}$ instead of $\mathbb{C}^n.$

\section{Five-term triangulation of $\mathbb{S}^3\backslash (L\cup\{\pm\infty\})$}\label{sec3}

In this section, we describe the five-term triangulation of $\mathbb{S}^3\backslash (L\cup\{\text{two points}\})$.
We remark that this triangulation was previously named Thurston triangulation in \cite{Cho13b}.

We place an octahedron ${\rm A}_r{\rm B}_r{\rm C}_r{\rm D}_r{\rm E}_r{\rm F}_r$ 
on each crossing $r$ of the link diagram as in Figure \ref{twistocta} 
so that the vertices ${\rm A}_r$ and ${\rm C}_r$ lie on the over-bridge and
the vertices ${\rm B}_r$ and ${\rm D}_r$ on the under-bridge of the diagram respectively. 
Then we twist the octahedron by gluing edges ${\rm B}_r{\rm F}_r$ to ${\rm D}_r{\rm F}_r$ and
${\rm A}_r{\rm E}_r$ to ${\rm C}_r{\rm E}_r$ respectively. The edges ${\rm A}_r{\rm B}_r$, ${\rm B}_r{\rm C}_r$,
${\rm C}_r{\rm D}_r$ and ${\rm D}_r{\rm A}_r$ are called {\it horizontal edges} and we sometimes express these edges
in the diagram as arcs around the crossing in the left-hand side of Figure \ref{twistocta}.

\begin{figure}[h]
\centering
\begin{picture}(6,8)  
  \setlength{\unitlength}{0.8cm}\thicklines
        \put(4,5){\arc[5](1,1){360}}
    \put(6,7){\line(-1,-1){4}}
    \put(2,7){\line(1,-1){1.8}}
    \put(4.2,4.8){\line(1,-1){1.8}}
    \put(3.7,3.2){$w_j$}
    \put(5.7,5){$w_k$}
    \put(3.7,6.7){$w_l$}
    \put(1.7,5){$w_m$}
    \put(2.2,3.9){${\rm A}_r$}
    \put(5.3,3.9){${\rm B}_r$}
    \put(5.3,5.9){${\rm C}_r$}
    \put(2.2,5.9){${\rm D}_r$}
    \put(4.3,4.8){$r$}
  \end{picture}\hspace{2cm}
\includegraphics[scale=0.5]{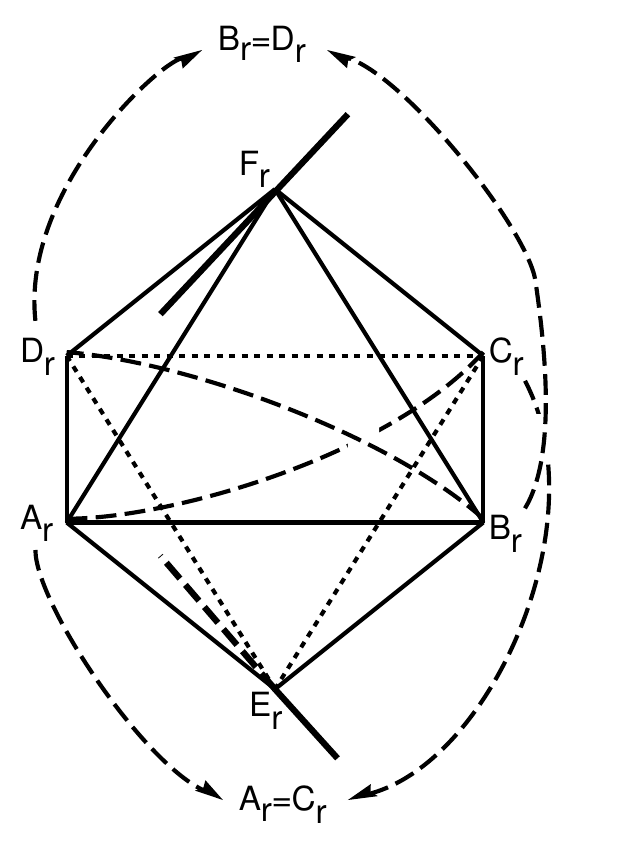}
 \caption{Octahedron on the crossing $r$}\label{twistocta}
\end{figure}

Then we glue faces of the octahedra following the sides of the diagram. 
Specifically, there are three gluing patterns as in Figure \ref{glue pattern}.
In each cases (a), (b) and (c), we identify the faces
$\triangle{\rm A}_{r}{\rm B}_{r}{\rm E}_{r}\cup\triangle{\rm C}_{r}{\rm B}_{r}{\rm E}_{r}$ to
$\triangle{\rm C}_{r+1}{\rm D}_{r+1}{\rm F}_{r+1}\cup\triangle{\rm C}_{r+1}{\rm B}_{r+1}{\rm F}_{r+1}$,
$\triangle{\rm B}_{r}{\rm C}_{r}{\rm F}_{r}\cup\triangle{\rm D}_{r}{\rm C}_{r}{\rm F}_{r}$ to
$\triangle{\rm D}_{r+1}{\rm C}_{r+1}{\rm F}_{r+1}\cup\triangle{\rm B}_{r+1}{\rm C}_{r+1}{\rm F}_{r+1}$
and
$\triangle{\rm A}_{r}{\rm B}_{r}{\rm E}_{r}\cup\triangle{\rm C}_{r}{\rm B}_{r}{\rm E}_{r}$ to
$\triangle{\rm C}_{r+1}{\rm B}_{r+1}{\rm E}_{r+1}\cup\triangle{\rm A}_{r+1}{\rm B}_{r+1}{\rm E}_{r+1}$
respectively. We call (a) {\it alternating gluing}, (b) and (c) {\it non-alternating gluings}.

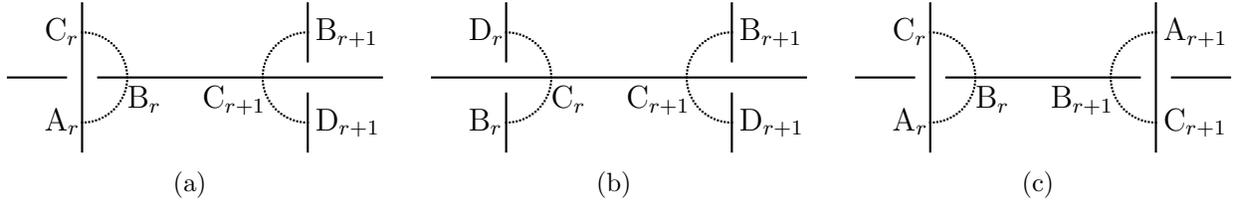
\begin{figure}[h]
\centering
  \subfigure[]
  {\begin{picture}(5,2)\thicklines
   \put(1,1){\arc[5](0,-0.6){180}}
   \put(4,1){\arc[5](0,0.6){180}}
   \put(1.2,1){\line(1,0){3.8}}
   \put(1,2){\line(0,-1){2}}
   \put(4,0){\line(0,1){0.8}}
   \put(4,2){\line(0,-1){0.8}}
   \put(0.8,1){\line(-1,0){0.8}}
   \put(0.5,0.3){${\rm A}_r$}
   \put(1.6,0.6){${\rm B}_r$}
   \put(0.5,1.5){${\rm C}_r$}
   \put(4.1,0.3){${\rm D}_{r+1}$}
   \put(2.6,0.6){${\rm C}_{r+1}$}
   \put(4.1,1.5){${\rm B}_{r+1}$}
  \end{picture}}\hspace{0.5cm}
  \subfigure[]
  {\begin{picture}(5,2)\thicklines
   \put(1,1){\arc[5](0,-0.6){180}}
   \put(4,1){\arc[5](0,0.6){180}}
   \put(5,1){\line(-1,0){5}}
   \put(1,2){\line(0,-1){0.8}}
   \put(1,0){\line(0,1){0.8}}
   \put(4,2){\line(0,-1){0.8}}
   \put(4,0){\line(0,1){0.8}}
   \put(0.5,0.3){${\rm B}_r$}
   \put(1.6,0.6){${\rm C}_r$}
   \put(0.5,1.5){${\rm D}_r$}
   \put(4.1,0.3){${\rm D}_{r+1}$}
   \put(2.6,0.6){${\rm C}_{r+1}$}
   \put(4.1,1.5){${\rm B}_{r+1}$}
  \end{picture}}\hspace{0.5cm}
  \subfigure[]
  {\begin{picture}(5,2)\thicklines
   \put(1,1){\arc[5](0,-0.6){180}}
   \put(4,1){\arc[5](0,0.6){180}}
   \put(4.2,1){\line(1,0){0.8}}
   \put(1,2){\line(0,-1){2}}
   \put(0.8,1){\line(-1,0){0.8}}
   \put(4,2){\line(0,-1){2}}
   \put(1.2,1){\line(1,0){2.6}}
   \put(0.5,0.3){${\rm A}_r$}
   \put(1.6,0.6){${\rm B}_r$}
   \put(0.5,1.5){${\rm C}_r$}
   \put(4.1,0.3){${\rm C}_{r+1}$}
   \put(2.6,0.6){${\rm B}_{r+1}$}
   \put(4.1,1.5){${\rm A}_{r+1}$}
  \end{picture}}
  \caption{Three gluing patterns}\label{glue pattern}
\end{figure}

Note that this gluing process identifies vertices $\{{\rm A}_r, {\rm C}_r\}$ to one point, denoted by $-\infty$,
and $\{{\rm B}_r, {\rm D}_r\}$ to another point, denoted by $\infty$, and finally $\{{\rm E}_r, {\rm F}_r\}$ to
the other points, denoted by ${\rm P}_j$ where $j=1,\ldots,s$ and $s$ is the number of the components of the link $L$. 
The regular neighborhoods of $-\infty$ and $\infty$ are 3-balls and that of $\cup_{j=1}^s P_j$ is
a tubular neighborhood of the link $L$. 
Therefore, if we remove the vertices ${\rm P}_1,\ldots,{\rm P}_s$ from the {octahedra}, 
then we obtain a decomposition of $\mathbb{S}^3\backslash L$, denoted by $T$.
On the other hand, if we remove all the vertices of the {octahedra}, 
the result becomes an ideal decomposition of $\mathbb{S}^3\backslash (L\cup\{\pm\infty\})$.
We call the latter {\it the octahedral decomposition} and denote it by $T'$.

To obtain an ideal triangulation from $T'$, we divide each octahedron 
${\rm A}_r{\rm B}_r{\rm C}_r{\rm D}_r{\rm E}_r{\rm F}_r$ in Figure \ref{twistocta} into five ideal tetrahedra
${\rm A}_r{\rm B}_r{\rm D}_r{\rm F}_r$, ${\rm B}_r{\rm C}_r{\rm D}_r{\rm F}_r$,
${\rm A}_r{\rm B}_r{\rm C}_r{\rm D}_r$, ${\rm A}_r{\rm B}_r{\rm C}_r{\rm E}_r$
and ${\rm A}_r{\rm C}_r{\rm D}_r{\rm E}_r$.
We call the result {\it the five-term triangulation} of $\mathbb{S}^3\backslash (L\cup\{\pm\infty\})$.
On the other hand, if we divide the same octahedron into four ideal tetrahedra
${\rm A}_r{\rm B}_r{\rm E}_r{\rm F}_r$, ${\rm B}_r{\rm C}_r{\rm E}_r{\rm F}_r$,
${\rm C}_r{\rm D}_r{\rm E}_r{\rm F}_r$ and ${\rm D}_r{\rm A}_r{\rm E}_r{\rm F}_r$,
then the result is called {\it the four-term triangulation}.
The four-term triangulation was used in \cite{Cho13a} and will appear again in Section \ref{sec5} and Section \ref{sec6} of this article.

Note that if we assign the shape parameter $u\in\mathbb{C}\backslash\{0,1\}$ to an edge of an ideal hyperbolic tetrahedron,
then the other edges are also parametrized by $u, u':=\frac{1}{1-u}$ and $u'':=1-\frac{1}{u}$
as in Figure \ref{fig6}.

\begin{figure}[h]
\begin{center}
  {\setlength{\unitlength}{0.4cm}
  \begin{picture}(12,10)\thicklines
   \put(1,1){\line(1,0){8}}
   \put(1,1){\line(1,2){4}}
   \put(5,9){\line(1,-2){4}}
   \put(9,1){\line(1,2){2}}
   \put(5,9){\line(3,-2){6}}
   \dashline{0.5}(1,1)(11,5)
   \put(5,0){$u$}
   \put(8,7){$u$}
   \put(2.2,5){$u'$}
   \put(10.5,3){$u'$}
   \put(5,3){$u''$}
   \put(7,5){$u''$}
  \end{picture}}
  \caption{Parametrization of an ideal tetrahedron with a shape parameter $u$}\label{fig6}
\end{center}
\end{figure}
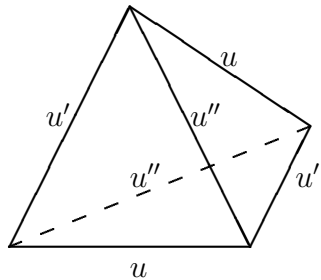
To determine the shape of the octahedron in Figure \ref{twistocta}, 
we assign shape parameters to edges of tetrahedra as in Figure \ref{fig7}. 
Note that both of $\frac{w_j w_l}{w_k w_m}$ in Figure \ref{fig7}(a) and $\frac{w_k w_m}{w_j w_l}$ in Figure \ref{fig7}(b)
are the shape parameters of the tetrahedron ${\rm A}_r{\rm B}_r{\rm C}_r{\rm D}_r$ assigned to the edges 
${\rm B}_r{\rm D}_r$ and ${\rm A}_r{\rm C}_r$. Also note that the assignment of shape parameters here does not
depend on the orientations of the link diagram.

\begin{figure}
\centering
  \subfigure[Positive crossing]
  {\begin{picture}(6,7)  
  \setlength{\unitlength}{0.8cm}\thicklines
        \put(4,5){\arc[5](1,1){360}}
    \put(6,7){\vector(-1,-1){4}}
    \put(2,7){\line(1,-1){1.8}}
    \put(4.2,4.8){\vector(1,-1){1.8}}
    \put(3.7,3.2){$w_j$}
    \put(5.7,5){$w_k$}
    \put(3.7,6.7){$w_l$}
    \put(1.7,5){$w_m$}
    \put(2.2,3.9){${\rm A}_r$}
    \put(5.3,3.9){${\rm B}_r$}
    \put(5.3,5.9){${\rm C}_r$}
    \put(2.2,5.9){${\rm D}_r$}
    \put(4.3,4.8){$r$}
  \end{picture}\hspace{2cm}\includegraphics[scale=0.6]{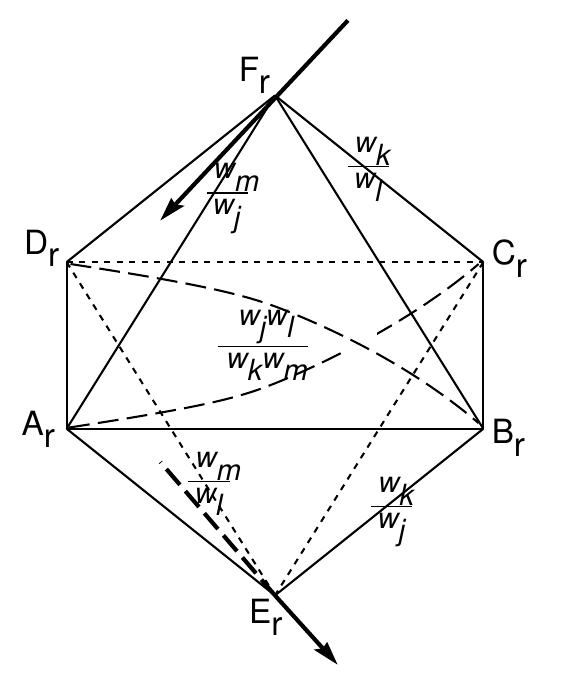}}\\
  \subfigure[Negative crossing]
  {\begin{picture}(6,7)  
  \setlength{\unitlength}{0.8cm}\thicklines
        \put(4,5){\arc[5](1,1){360}}
    \put(2,7){\vector(1,-1){4}}
   \put(6,7){\line(-1,-1){1.8}}
   \put(3.8,4.8){\vector(-1,-1){1.8}}
    \put(3.7,3.2){$w_j$}
    \put(5.7,5){$w_k$}
    \put(3.7,6.7){$w_l$}
    \put(1.7,5){$w_m$}
    \put(2.2,3.9){${\rm B}_r$}
    \put(5.3,3.9){${\rm A}_r$}
    \put(5.3,5.9){${\rm D}_r$}
    \put(2.2,5.9){${\rm C}_r$}
    \put(4.3,4.8){$r$}
  \end{picture}\hspace{2cm}\includegraphics[scale=0.6]{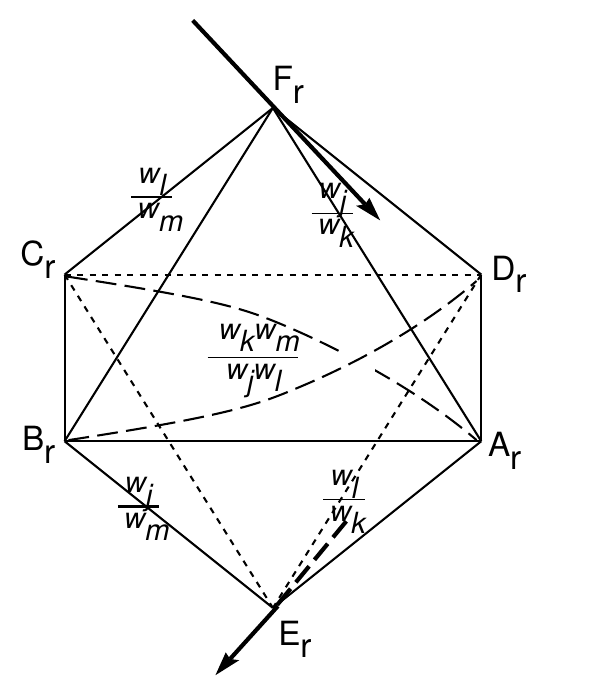}}
  \caption{Assignment of shape parameters}\label{fig7}
\end{figure}

To obtain the boundary parabolic representation 
$\pi_1(\mathbb{S}^3\backslash (L\cup\{\pm\infty\}))\longrightarrow{\rm PSL}(2,\mathbb{C})$,
we require two conditions on the ideal triangulation of $\mathbb{S}^3\backslash (L\cup\{\pm\infty\})$;
the product of shape parameters on any edge in the triangulation becomes one, and the holonomies induced
by meridian and longitude of the torus cusps act as translations on the torus cusp.
Note that these conditions are expressed as equations of shape parameters.
The former equations are called {\it (Thurston's) gluing equations}, the latter is called {\it completeness condition},
and the whole set of these equations are called {\it the hyperbolicity equations}.
As already discussed in \cite{Cho13a} and Section \ref{sec1}, a solution ${\bold w}$ of the hyperbolicity equation determines
a boundary-parabolic representation 
$$\rho_{\bold{w}}:\pi_1(\mathbb{S}^3\backslash (L\cup\{\pm\infty\}))
=\pi_1(\mathbb{S}^3\backslash L)\longrightarrow{\rm PSL}(2,\mathbb{C}).$$

The rest of this section is devoted to the proof of Proposition \ref{pro1}. 
It was already proved\footnote{
The proof is in Lemma 4.1 and Proposition 1.1 of \cite{Cho13b}, which started with the general case, 
and then proceeded to the collapsed cases.
In this article, the collapsed cases do not happen, so the general case is enough.}
in \cite{Cho13b}, so we sketch the proof here.

\begin{proof}[Sketch of the proof of Proposition \ref{pro1}]
For all the crossings of the link diagram and the corresponding octahedra in Figure \ref{twistocta}, 
let $\mathcal{A}$ be the set of horizontal edges
${\rm A}_r{\rm B}_r$, ${\rm B}_r{\rm C}_r$, ${\rm C}_r{\rm D}_r$ and ${\rm D}_r{\rm A}_r$. 
Let $\mathcal{B}$ be the set of edges ${\rm B}_r{\rm F}_r$, ${\rm D}_r{\rm F}_r$,
${\rm A}_r{\rm E}_r$, ${\rm C}_r{\rm E}_r$ of all crossings and other edges glued to them. Let $\mathcal{C}$ be
the set of edges ${\rm A}_r{\rm C}_r$ and ${\rm B}_r{\rm D}_r$ of all crossings. 
Finally, let $\mathcal{D}$ be the set of all the other edges in the triangulation. Note that
if the link diagram is alternating, then $\mathcal{D}=\emptyset$.

{Recall that $W_P$ and $W_N$ are the potential functions defined in Figure \ref{pic2}.}
Direct calculation shows that $\exp(w_a\frac{\partial W_P}{\partial w_a})$ for $a=j,k,l,m$
is the product of the shape parameters assigned to the horizontal edge 
corresponding to the region $w_a$ in Figure \ref{fig7}(a). For example, 
$$\exp\left(w_j\frac{\partial W_P}{\partial w_j}\right)
=\left(\frac{w_j w_l}{w_k w_m}\right)'\left(\frac{w_m}{w_j}\right)''\left(\frac{w_k}{w_j}\right)'',$$
which is the product of the shape parameters assigned to the edge ${\rm A}_r{\rm B}_r$.
(See (8)--(10) of \cite{Cho13b} for the other equations. 
In \cite{Cho13b}, our $W_P$ and $W_N$ were denoted by $P_1$ and $N_1$, respectively.)
Furthermore, the same holds for $\exp(w_a\frac{\partial W_N}{\partial w_a})$ too.
Therefore, $\mathcal{I}$ becomes the gluing equations of the edges in $\mathcal{A}$.

The gluing equations of the edges in $\mathcal{C}$ and $\mathcal{D}$ hold trivially
because of the assigning rule of the shape parameters to the tetrahedra. 
We will show that the gluing equations of the edges in $\mathcal{B}$ hold trivially too. 
Consider the alternating gluing in Figure \ref{fig8}(a). 
This induces a part of the cusp diagram as in Figure \ref{fig8}(b), which comes from the gluing of two tetrahedra in Figure \ref{fig8}(c).
On the other hand, the non-alternating gluings in Figure \ref{glue pattern}(b) and (c) 
do not have any effect on the cusp diagram of the torus cusp.

\begin{figure}[h]\centering
\subfigure[Gluing diagram]{
\begin{picture}(5,2)\setlength{\unitlength}{1.2cm}\thicklines
   \put(1,1){\arc[5](0,-0.6){180}}
   \put(4,1){\arc[5](0,0.6){180}}
   \put(1.2,1){\line(1,0){3.8}}
   \put(1,2){\line(0,-1){2}}
   \put(4,0){\line(0,1){0.8}}
   \put(4,2){\line(0,-1){0.8}}
   \put(0.8,1){\line(-1,0){0.8}}
   \put(0.5,0.3){${\rm A}_r$}
   \put(1.6,0.6){${\rm B}_r$}
   \put(0.5,1.5){${\rm C}_r$}
   \put(4.1,0.3){${\rm D}_{r+1}$}
   \put(2.6,0.6){${\rm C}_{r+1}$}
   \put(4.1,1.5){${\rm B}_{r+1}$}
   \put(2.2,0.3){$w_a$}
   \put(2.2,1.5){$w_b$}\end{picture}}\hspace{2cm}
      \subfigure[Cusp diagram]{\includegraphics[scale=0.65]{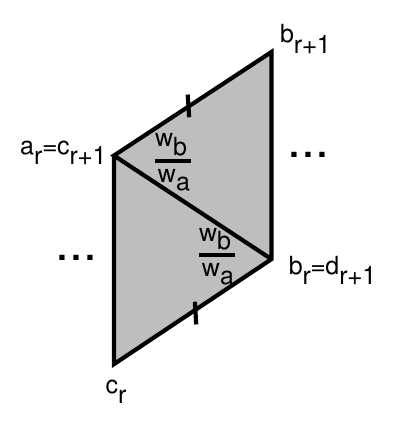}}
   \subfigure[Gluing tetrahedra]{\includegraphics[scale=0.6]{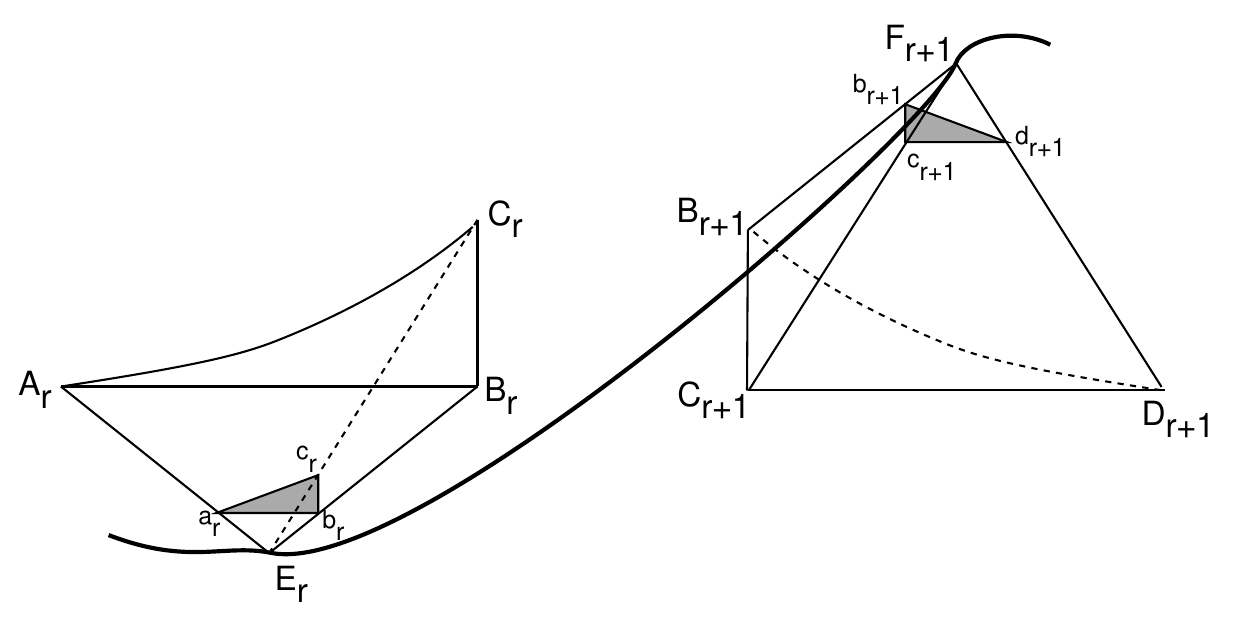}}
   \caption{Cusp diagram induced from Figure \ref{fig6}(a)}\label{fig8}
   \end{figure}

Note that the cusp diagram in Figure \ref{fig8}(b) is an annulus because the edge $c_{r+1}b_{r+1}$ is identified with $c_r b_r$.
The shape parameter $\frac{w_b}{w_a}$ is assigned to the edges 
${\rm B}_r{\rm E}_r$ and ${\rm C}_{r+1}{\rm F}_{r+1}$ in Figure \ref{fig8}(c), 
and the product of shape parameters on the edge 
${\rm B}_r{\rm E}_r={\rm B}_{r+1}{\rm F}_{r+1}={\rm D}_{r+1}{\rm F}_{r+1}\in\mathcal{B}$
(around the vertex $b_r=b_{r+1}=d_{r+1}$ in Figure \ref{fig8}(b)) is
$$\frac{w_b}{w_a}\left(\frac{w_b}{w_a}\right)''\left(\frac{w_b}{w_a}\right)'=-1.$$
Therefore, if we consider another annulus on the right-hand side of the edge $b_{r+1}d_{r+1}$ in Figure \ref{fig8}(b),
the gluing equation of the edge ${\rm B}_r{\rm E}_r={\rm B}_{r+1}{\rm F}_{r+1}={\rm D}_{r+1}{\rm F}_{r+1}\in\mathcal{B}$
is satisfied trivially. 

The other gluing equations of the edges in $\mathcal{B}$ can be obtained in the same way.
Hence, we conclude $\mathcal{I}$ induces the gluing equations of all the edges in 
$\mathcal{A}\cup\mathcal{B}\cup\mathcal{C}\cup\mathcal{D}$.
Furthermore, the cusp diagram in Figure \ref{fig8}(b) already satisfies one completeness condition of
the meridian that sends the edge $c_r b_r$ to $c_{r+1}b_{r+1}$. 
Therefore, $\mathcal{I}$ induces all the hyperbolicity equations.

\end{proof}

\section{Proof of Theorem \ref{thm1}}\label{sec4}
In this section, we always assume ${\bold w}=(w_1,\ldots,w_n)$ is a solution in $\mathcal{T}$.
The main technique of the proof of Theorem \ref{thm1} is the extended Bloch group theory in \cite{Zickert09}.
To apply it, we first define the vertex ordering of the five-term triangulation in Figure \ref{fig9} 
so that the order 0, 1, 2, 3 is assigned
to the vertices of each tetrahedra in the order of ${\rm D}_r{\rm B}_r{\rm F}_r{\rm A}_r$, 
${\rm B}_r{\rm E}_r{\rm A}_r{\rm C}_r$, ${\rm D}_r{\rm B}_r{\rm F}_r{\rm C}_r$,
${\rm D}_r{\rm E}_r{\rm A}_r{\rm C}_r$ and ${\rm D}_r{\rm B}_r{\rm A}_r{\rm C}_r$.

\begin{figure}[h]\centering
\subfigure[Positive crossing]{\includegraphics[scale=0.6]{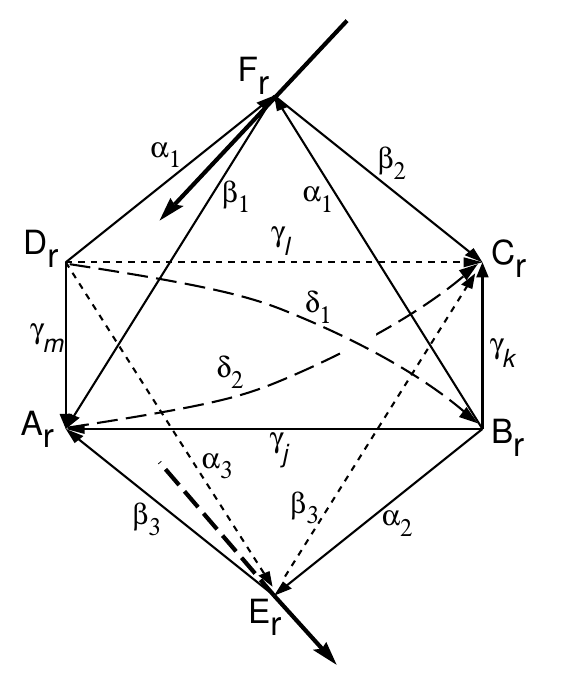}}
\subfigure[Negative crossing]{\includegraphics[scale=0.6]{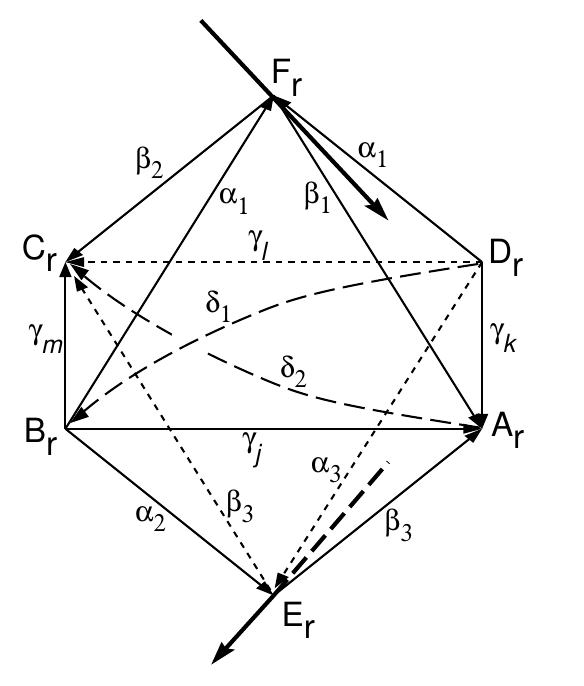}}
\caption{Vertex orderings and labelings of edges}\label{fig9}
\end{figure}

Note that the vertex ordering of each tetrahedron induces
the orientations of the edges and the tetrahedron. The induced orientation of the tetrahedron
can be different from the original orientation induced by the triangulation.
For example, {this is the case for the tetrahedra ${\rm D}_r{\rm B}_r{\rm F}_r{\rm C}_r$ and ${\rm D}_r{\rm E}_r{\rm A}_r{\rm C}_r$
in Figure \ref{fig9}(a), ${\rm D}_r{\rm B}_r{\rm F}_r{\rm A}_r$, 
${\rm B}_r{\rm E}_r{\rm A}_r{\rm C}_r$ and ${\rm D}_r{\rm B}_r{\rm A}_r{\rm C}_r$
in Figure \ref{fig9}(b).}
If the two orientations are the same, we define the sign of the tetrahedron $\sigma=1$, and if they are different,
then $\sigma=-1$.

One important property of our vertex orientation is that 
when two edges are glued together in the triangulation, 
the orientations of the two edges induced by each vertex orderings coincide.
(We call this condition {\it edge-orientation consistency}.)
Because of this property, we can apply the formula in \cite{Zickert09}.

The five-term triangulation we are using is an ideal triangulation, so we parametrized all ideal tetrahedra of the triangulation
by assigning shape parameters as in Figure \ref{fig7}.
For each tetrahedron with the vertex-orientation, we define an element of the extended pre-Bloch group
$\sigma[u^{\sigma};p,q]\in\widehat{\mathcal{P}}(\mathbb{C})$, where $\sigma$ is the sign of the tetrahedron, 
$u$ is the shape parameter assigned to the edge connecting the $0$th and $1$st vertices, and $p,q$ are certain integers.

Zickert suggested a way to determine $p$ and $q$ from the developing map 
of the representation $\rho:\pi_1(M)\rightarrow{\rm PSL}(2,\mathbb{C})$ of a hyperbolic manifold $M$ in \cite{Zickert09},
and showed that
\begin{equation}\label{formula}
\sum\sigma\widehat{L}([u^{\sigma};p,q])\equiv i(\vol(\rho)+i\,\cs(\rho))\modulo,
\end{equation}
where the summation is over all tetrahedra and 
$$\widehat{L}([u;p,q])=\li(u)-\frac{\pi^2}{6}+\frac{1}{2}q\pi i\log u+\frac{1}{2}(p\pi i+\log u)\log(1-u)$$
is a complex valued function defined on $\widehat{\mathcal{P}}(\mathbb{C})$.

{ Although our five-term triangulation is for $\mathbb{S}^3\backslash (L\cup\{\pm\infty\})$, 
the formula of \cite{Zickert09} is still valid because we can consider the two points $\pm\infty$
the interior points of the manifold $\mathbb{S}^3\backslash L$. To apply the formula, we have to remove the interior vertices,
which results in our five-term triangulation of $\mathbb{S}^3\backslash (L\cup\{\pm\infty\})$.
(See Theorem 4.11 of \cite{Zickert09} for details.)

Here, we remark that the author made a mistake in his previous article \cite{Cho13a} when justifying the usage of the triangulation of
$\mathbb{S}^3\backslash (L\cup\{\pm\infty\})$.
He mentioned the Thurston's spinning construction of \cite{Tillmann13}, 
but it can be applied when a boundary-parabolic representation is already given,
and the construction shows that the parameter space determines the volume of the representation, not the complex volume.
(Note that Lemma 2.3 and Proposition 3.1 of \cite{Tillmann13} are still valid 
for {\it any} boundary-parabolic representation and its volume.
{However, we cannot directly guarantee the invariance of the Chern-Simons invariant from \cite{Tillmann13}.})}

To determine $p, q$ of $\sigma[u^{\sigma};p,q]$ corresponding to each tetrahedron with vertex orientation, 
we assign certain complex numbers 
$g_{jk}$ to the edge connecting the $j$th and $k$th vertices, where $j,k\in\{0,1,2,3\}$ and $j<k$.
We assume $g_{jk}$ satisfies the property that 
if two edges are glued together in the triangulation, then the assigned $g_{jk}$'s of the glued edges coincide.
We do not use the exact values of $g_{jk}$ in this article, but remark that there is an explicit method in \cite{Zickert09} 
for calculating these numbers using the developing map. With the given numbers $g_{jk}$, we can calculate $p,q$ using the following equation,
which appeared as equation (3.5) in \cite{Zickert09}:
  \begin{eqnarray}
      \left\{\begin{array}{ll}
      p\pi i =-\log u^{\sigma}+\log g_{03}+\log g_{12}-\log g_{02}-\log g_{13},\\
      q\pi i =\log(1-u^{\sigma})+\log g_{02}+\log g_{13}-\log g_{01}-\log g_{23}.
      \end{array}\right.\label{pq}
  \end{eqnarray}

To avoid confusion, we use variables $\alpha_1, \alpha_2, \alpha_3, \beta_1, \beta_2, \beta_3,
\gamma_j, \gamma_k, \gamma_l, \gamma_m, \delta_1$ and $\delta_2$ instead of $g_{jk}$ as in Figure \ref{fig9}.
Note that $\gamma_a$ ($a=j,k,l,m$) is assigned to the horizontal edge that lies in
the region with $w_a$.
The orientation we defined in Figure \ref{fig9} satisfies the edge-orientation consistency,
so we will apply the formula of \cite{Zickert09} to our five-term triangulation.

For the positive crossing $r$ in Figure \ref{fig9}(a), 
let $\sigma_1^{(r)}[u_1^{\sigma_1^{(r)}};p_1^{(r)},q_1^{(r)}]$, $\ldots$, 
$\sigma_5^{(r)}[u_5^{\sigma_5^{(r)}};p_5^{(r)},q_5^{(r)}]$ be the elements in $\widehat{\mathcal{P}}(\mathbb{C})$
corresponding to ${\rm D}_r{\rm B}_r{\rm F}_r{\rm A}_r$, 
${\rm B}_r{\rm E}_r{\rm A}_r{\rm C}_r$, ${\rm D}_r{\rm B}_r{\rm F}_r{\rm C}_r$,
${\rm D}_r{\rm E}_r{\rm A}_r{\rm C}_r$ and ${\rm D}_r{\rm B}_r{\rm A}_r{\rm C}_r$ respectively. Then we have

$$\sigma_1^{(r)}=\sigma_2^{(r)}=\sigma_5^{(r)}=1,~\sigma_3^{(r)}=\sigma_4^{(r)}=-1,$$
$$u_1^{\sigma_1^{(r)}}=\frac{w_m}{w_j},~ u_2^{\sigma_2^{(r)}}=\frac{w_k}{w_j},~
u_3^{\sigma_3^{(r)}}=\frac{w_l}{w_k},~ u_4^{\sigma_4^{(r)}}=\frac{w_l}{w_m},~
u_5^{\sigma_5^{(r)}}=\frac{w_j w_l}{w_k w_m},$$
and direct calculation from (\ref{pq}) shows
\begin{eqnarray}
      \left\{\begin{array}{l}
p_1^{(r)}\pi i+\log\frac{w_m}{w_j}=\log\gamma_m-\log\gamma_j,\\
p_2^{(r)}\pi i+\log\frac{w_k}{w_j}=\log\gamma_k-\log\gamma_j,\\
p_3^{(r)}\pi i+\log\frac{w_l}{w_k}=\log\gamma_l-\log\gamma_k,\\
p_4^{(r)}\pi i+\log\frac{w_l}{w_m}=\log\gamma_l-\log\gamma_m,\\
p_5^{(r)}\pi i+\log\frac{w_j w_l}{w_k w_m}=\log\gamma_j+\log\gamma_l-\log\gamma_k-\log\gamma_m,
      \end{array}\right.\label{p1}
\end{eqnarray}
and
\begin{eqnarray}
      \left\{\begin{array}{l}
q_1^{(r)}\pi i-\log(1-\frac{w_m}{w_j})=\log\alpha_1+\log\gamma_j-\log\delta_1-\log\beta_1,\\
q_2^{(r)}\pi i-\log(1-\frac{w_k}{w_j})=\log\gamma_j+\log\beta_3-\log\alpha_2-\log\delta_2,\\
q_3^{(r)}\pi i-\log(1-\frac{w_l}{w_k})=\log\alpha_1+\log\gamma_k-\log\delta_1-\log\beta_2,\\
q_4^{(r)}\pi i-\log(1-\frac{w_l}{w_m})=\log\gamma_m+\log\beta_3-\log\alpha_3-\log\delta_2,\\
q_5^{(r)}\pi i-\log(1-\frac{w_j w_l}{w_k w_m})=\log\gamma_m+\log\gamma_k-\log\delta_1-\log\delta_2.
      \end{array}\right.\label{q1}
\end{eqnarray}

For the negative crossing $r$ in Figure \ref{fig9}(b), 
let $\sigma_1^{(r)}[u_1^{\sigma_1^{(r)}};p_1^{(r)},q_1^{(r)}]$, $\ldots$, 
$\sigma_5^{(r)}[u_5^{\sigma_5^{(r)}};p_5^{(r)},q_5^{(r)}]$ be the elements in $\widehat{\mathcal{P}}(\mathbb{C})$
corresponding to ${\rm B}_r{\rm E}_r{\rm A}_r{\rm C}_r$, ${\rm D}_r{\rm B}_r{\rm F}_r{\rm A}_r$, 
${\rm D}_r{\rm E}_r{\rm A}_r{\rm C}_r$, ${\rm D}_r{\rm B}_r{\rm F}_r{\rm C}_r$
and ${\rm D}_r{\rm B}_r{\rm A}_r{\rm C}_r$ respectively. Then we have
$$\sigma_1^{(r)}=\sigma_2^{(r)}=\sigma_5^{(r)}=-1,~\sigma_3^{(r)}=\sigma_4^{(r)}=1,$$
$$u_1^{\sigma_1^{(r)}}=\frac{w_m}{w_j},~ u_2^{\sigma_2^{(r)}}=\frac{w_k}{w_j},~
u_3^{\sigma_3^{(r)}}=\frac{w_l}{w_k},~ u_4^{\sigma_4^{(r)}}=\frac{w_l}{w_m},~
u_5^{\sigma_5^{(r)}}=\frac{w_j w_l}{w_k w_m},$$
and direct calculation from (\ref{pq}) shows
\begin{eqnarray}
      \left\{\begin{array}{l}
p_1^{(r)}\pi i+\log\frac{w_m}{w_j}=\log\gamma_m-\log\gamma_j,\\
p_2^{(r)}\pi i+\log\frac{w_k}{w_j}=\log\gamma_k-\log\gamma_j,\\
p_3^{(r)}\pi i+\log\frac{w_l}{w_k}=\log\gamma_l-\log\gamma_k,\\
p_4^{(r)}\pi i+\log\frac{w_l}{w_m}=\log\gamma_l-\log\gamma_m,\\
p_5^{(r)}\pi i+\log\frac{w_j w_l}{w_k w_m}=\log\gamma_j+\log\gamma_l-\log\gamma_k-\log\gamma_m,
      \end{array}\right.\label{p2}
\end{eqnarray}
and
\begin{eqnarray}
      \left\{\begin{array}{l}
q_1^{(r)}\pi i-\log(1-\frac{w_m}{w_j})=\log\gamma_j+\log\beta_3-\log\alpha_2-\log\delta_2,\\
q_2^{(r)}\pi i-\log(1-\frac{w_k}{w_j})=\log\alpha_1+\log\gamma_j-\log\delta_1-\log\beta_1,\\
q_3^{(r)}\pi i-\log(1-\frac{w_l}{w_k})=\log\gamma_k+\log\beta_3-\log\alpha_3-\log\delta_2,\\
q_4^{(r)}\pi i-\log(1-\frac{w_l}{w_m})=\log\alpha_1+\log\gamma_m-\log\delta_1-\log\beta_2,\\
q_5^{(r)}\pi i-\log(1-\frac{w_j w_l}{w_k w_m})=\log\gamma_k+\log\gamma_m-\log\delta_1-\log\delta_2.
      \end{array}\right.\label{q2}
\end{eqnarray}

From the above definitions, we can conclude 
$$\sum_{r\text{: crossings}}\sum_{c=1}^5\sigma_c^{(r)}[u_c^{\sigma_c^{(r)}};p_c^{(r)},q_c^{(r)}]
\in\widehat{\mathcal{P}}(\mathbb{C})$$
is the corresponding element of the five-term triangulation.
The following observation can be easily obtained.

\begin{obs}\label{obs1} There exists a constant $C$ satisfying
$$\log w_b\equiv\log \gamma_b+C~~({\rm mod}~\pi i),$$
for all $b=1,\ldots,n$.
\end{obs}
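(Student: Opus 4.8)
The plan is to read the $p$-equations (\ref{p1}) and (\ref{p2}) modulo $\pi i$. Since each $p_c^{(r)}$ is an integer, the term $p_c^{(r)}\pi i$ vanishes modulo $\pi i$, so the five $p$-relations at any crossing collapse into relations involving only the $\log w_a$ and $\log\gamma_a$. To organize this, I would set $C_b:=\log w_b-\log\gamma_b$ for each region $b=1,\ldots,n$. The first relation in (\ref{p1}) reads $\log\frac{w_m}{w_j}\equiv\log\gamma_m-\log\gamma_j~({\rm mod}~\pi i)$, which rearranges to $C_m\equiv C_j~({\rm mod}~\pi i)$; the second, third and fourth relations give $C_k\equiv C_j$, $C_l\equiv C_k$ and $C_l\equiv C_m$ modulo $\pi i$ in exactly the same way, and the fifth relation is then an automatic consequence. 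Hence at every crossing the four surrounding regions $w_j,w_k,w_l,w_m$ share one common value of $C_b$ modulo $\pi i$. Because the $p$-part of the negative-crossing relations (\ref{p2}) is identical in form, the same conclusion holds verbatim at negative crossings.

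It then remains to propagate this local constancy across the whole diagram. I would argue that the four regions meeting at a fixed crossing all have equal $C_b$ modulo $\pi i$, and that two crossings joined by a common side share a region, so their (locally constant) values of $C_b$ must agree. Since the diagram of the link $L$ is connected, any two regions are joined by a finite chain of crossings in which consecutive crossings share a region; walking along such a chain shows that every $C_b$ equals one and the same value $C$ modulo $\pi i$. This $C$ is precisely the constant asserted in the statement, and we conclude $\log w_b\equiv\log\gamma_b+C~({\rm mod}~\pi i)$ for all $b=1,\ldots,n$.

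The computation is entirely routine once one notices that the integers $p_c^{(r)}$ disappear modulo $\pi i$; the only point requiring a little care is the connectivity step. The fact I would invoke is that, for a connected diagram, the regions form a connected graph when two regions are declared adjacent whenever they meet at a crossing. This holds because two regions sharing a side are adjacent in the usual dual graph (which is connected), and sharing a side forces them to meet at the two crossings bounding that side, so adjacency-by-side refines to adjacency-by-crossing. I do not expect any genuine obstacle beyond phrasing this connectivity argument precisely.
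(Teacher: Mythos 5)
Your proposal is correct and follows essentially the same route as the paper: the paper's (very terse) proof likewise sets $C=\log w_1-\log\gamma_1$ and observes that the relations (\ref{p1}) and (\ref{p2}), read at every crossing, force $\log w_b-\log\gamma_b$ to be constant modulo $\pi i$. Your write-up merely makes explicit the two steps the paper calls trivial — that the integers $p_c^{(r)}$ disappear modulo $\pi i$, giving local constancy of $C_b$ at each crossing, and that connectivity of the diagram propagates this to a single global constant.
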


\begin{proof} The relation (\ref{p1}) or (\ref{p2}) holds for any crossing $r$ of the link diagram.
Therefore, by letting $C=\log w_1-\log\gamma_1$, it follows trivially.
\end{proof}

Now we define integer $Q_a^{(r)}$ for the crossing $r$ and $a=j,k,l,m$ by the following ways. 
For the positive crossing $r$ in Figure \ref{fig9}(a), we define
\begin{eqnarray}
      \left\{\begin{array}{l}
Q_j^{(r)}=q_1^{(r)}+q_2^{(r)}-q_5^{(r)}+p_1^{(r)}+p_2^{(r)},\\
Q_k^{(r)}=-q_2^{(r)}-q_3^{(r)}+q_5^{(r)}-p_1^{(r)},\\
Q_l^{(r)}=q_3^{(r)}+q_4^{(r)}-q_5^{(r)},\\
Q_m^{(r)}=-q_4^{(r)}-q_1^{(r)}+q_5^{(r)}-p_2^{(r)},
      \end{array}\right.\label{Q1}
\end{eqnarray}
and, for the negative crossing $r$ in Figure \ref{fig9}(b), we define
\begin{eqnarray}
      \left\{\begin{array}{l}
Q_j^{(r)}=-q_1^{(r)}-q_2^{(r)}+q_5^{(r)}-p_1^{(r)}-p_2^{(r)},\\
Q_k^{(r)}=q_2^{(r)}+q_3^{(r)}-q_5^{(r)}+p_1^{(r)},\\
Q_l^{(r)}=-q_3^{(r)}-q_4^{(r)}+q_5^{(r)},\\
Q_m^{(r)}=q_4^{(r)}+q_1^{(r)}-q_5^{(r)}+p_2^{(r)}.
      \end{array}\right.\label{Q2}
\end{eqnarray}

Note that, from the definitions (\ref{Q1}) and (\ref{Q2}), we can directly obtain
\begin{equation}\label{sumQ}
\sum_{a=j,k,l,m}Q_a^{(r)}=0,
\end{equation}
for any crossing $r$.

\begin{lem}\label{lem42} For the potential function $W(w_1,\ldots,w_n)$ and the index $b=1,\ldots,n$, we have
$$w_b\frac{\partial W}{\partial w_b}=\sum_r Q_b^{(r)}\pi i,$$
where $r$ is over the crossings that lie on the boundary of the region associated with $w_b$.
\end{lem}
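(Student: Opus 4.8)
The plan is to reduce the global identity to a sum of local (per-crossing) computations and then to show that the local error terms cancel around the boundary of the region carrying $w_b$. Since $W$ is the sum of the crossing potentials, I would first write
$$w_b\frac{\partial W}{\partial w_b}=\sum_{r}w_a\frac{\partial W_{(r)}}{\partial w_a},$$
where the sum is over the crossings $r$ lying on the boundary of the region of $w_b$, the symbol $W_{(r)}$ denotes $W_P$ or $W_N$ according to the sign of $r$, and $a\in\{j,k,l,m\}$ records which slot the region of $w_b$ occupies at the crossing $r$.

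For a single positive crossing I would differentiate $W_P$ directly, using $\frac{d}{dw}\li(w)=-\frac{\log(1-w)}{w}$ and the chain rule; for each slot $a$ this gives an explicit combination of $\log(1-\cdot)$ and $\log\frac{w_\bullet}{w_\bullet}$ terms, and the negative crossing is handled the same way with $W_N$. I would then expand $Q_a^{(r)}\pi i$ from its definition (\ref{Q1}) (resp. (\ref{Q2})) by plugging in the explicit $p_c^{(r)}$ and $q_c^{(r)}$ from (\ref{p1})--(\ref{q1}) (resp. (\ref{p2})--(\ref{q2})). Comparing the two, the dilogarithm-derived part of $Q_a^{(r)}\pi i$ matches $w_a\partial W_{(r)}/\partial w_a$ term for term, and the discrepancy is a combination of the logarithms of the auxiliary edge labels $\alpha_\bullet,\beta_\bullet,\gamma_\bullet,\delta_\bullet$ of Figure \ref{fig9}.

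The third step is to kill this discrepancy. A direct check shows that the labels $\gamma_\bullet$ (horizontal edges) and $\delta_\bullet$ (the diagonals $A_rC_r,B_rD_r$) already cancel inside each crossing, and that the four remaining error terms of a single crossing sum to zero, consistently with the homogeneity $\sum_{a}w_a\partial W_{(r)}/\partial w_a=0$ and with (\ref{sumQ}). The surviving pieces are the logarithms of the labels on the edges running to the cusp vertices $E_r$ and $F_r$ (the $\alpha,\beta$ labels). Here I would invoke the defining property that two edges glued in the triangulation carry equal labels $g_{jk}$: traversing the closed boundary of the region of $w_b$, the $E$/$F$-edge term contributed at a crossing is identified, across the side leaving that crossing, with the term contributed at the next crossing but with the opposite sign, so that the cyclic sum telescopes to zero. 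Summing the local identities then gives $w_b\partial W/\partial w_b=\sum_r Q_b^{(r)}\pi i$.

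The main obstacle is exactly this last cancellation. The per-crossing identity does \emph{not} hold on the nose: its $\alpha,\beta$ remainder is, in general, a nonzero quantity that is not even in $\pi i\,\mathbb{Z}$, so the equality of the lemma is genuinely a global statement. Verifying the telescoping therefore requires checking, against all three gluing patterns of Figure \ref{glue pattern} and compatibly with the vertex orderings of Figure \ref{fig9}, that the $E$/$F$-edge of crossing $r$ bordering the region of $w_b$ is glued to the correct $E$/$F$-edge of the next crossing with the matching label; this bookkeeping is the delicate part of the argument.
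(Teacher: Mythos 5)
Your proposal is correct and follows essentially the same route as the paper: a per-crossing comparison of $w_a\partial W^{(r)}/\partial w_a$ with $Q_a^{(r)}\pi i$ (in which the $\gamma$ and $\delta$ labels cancel, leaving an $\alpha,\beta$ remainder attached to the faces meeting ${\rm E}_r$ and ${\rm F}_r$), followed by pairwise cancellation of those remainders under the face gluings along the boundary of the region of $w_b$, which is exactly the telescoping you describe and what the paper verifies via Figure \ref{fig10}.
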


\begin{proof}
Note that $W_P$ and $W_N$ were defined in Figure \ref{pic2}.

For the positive crossing $r$ in Figure \ref{fig9}(a), direct calculation from (\ref{p1}) and (\ref{q1}) shows
\begin{eqnarray*}
      \left\{\begin{array}{l}
w_j\frac{\partial W_P}{\partial w_j}=Q_j^{(r)}\pi i+(\log\beta_1-\log\alpha_1)+(\log\alpha_2-\log\beta_3),\\
w_k\frac{\partial W_P}{\partial w_k}=Q_k^{(r)}\pi i+(\log\alpha_1-\log\beta_2)+(\log\beta_3-\log\alpha_2),\\
w_l\frac{\partial W_P}{\partial w_l}=Q_l^{(r)}\pi i+(\log\beta_2-\log\alpha_1)+(\log\alpha_3-\log\beta_3),\\
w_m\frac{\partial W_P}{\partial w_m}=Q_m^{(r)}\pi i+(\log\alpha_1-\log\beta_1)+(\log\beta_3-\log\alpha_3).
      \end{array}\right.
\end{eqnarray*}
For the negative crossing $r$ in Figure \ref{fig9}(b), direct calculation from (\ref{p2}) and (\ref{q2}) shows
\begin{eqnarray*}
      \left\{\begin{array}{l}
w_j\frac{\partial W_N}{\partial w_j}=Q_j^{(r)}\pi i+(\log\alpha_1-\log\beta_1)+(\log\beta_3-\log\alpha_2),\\
w_k\frac{\partial W_N}{\partial w_k}=Q_k^{(r)}\pi i+(\log\beta_1-\log\alpha_1)+(\log\alpha_3-\log\beta_3),\\
w_l\frac{\partial W_N}{\partial w_l}=Q_l^{(r)}\pi i+(\log\alpha_1-\log\beta_2)+(\log\beta_3-\log\alpha_3),\\
w_m\frac{\partial W_N}{\partial w_m}=Q_m^{(r)}\pi i+(\log\beta_2-\log\alpha_1)+(\log\alpha_2-\log\beta_3).
      \end{array}\right.
\end{eqnarray*}

From the above calculations, we can find a general rule. 
Elaborating on $w_j\frac{\partial W_P}{\partial w_j}$, 
consider the faces ${\rm A}_r{\rm B}_r{\rm F}_r$ and ${\rm A}_r{\rm B}_r{\rm E}_r$ in Figure \ref{fig9}(a).
The term $(\log\beta_1-\log\alpha_1)$ in $w_j\frac{\partial W_P}{\partial w_j}$ comes from
the edges ${\rm A}_r{\rm F}_r$ and ${\rm B}_r{\rm F}_r$ of the face ${\rm A}_r{\rm B}_r{\rm F}_r$ counterclockwise,
and the term $(\log\alpha_2-\log\beta_3)$ comes from the edges ${\rm B}_r{\rm E}_r$ and ${\rm A}_r{\rm E}_r$
of the face ${\rm A}_r{\rm B}_r{\rm E}_r$ clockwise. These rules hold for all the cases.

Consider the face ${\rm A}_r{\rm B}_r{\rm F}_r$ and its corresponding term $(\log\beta_1-\log\alpha_1)$.
As in Figure \ref{fig10}, the face glued to ${\rm A}_r{\rm B}_r{\rm F}_r$ induces the term $(\log\alpha_1-\log\beta_1)$,
which cancel out the term corresponding to ${\rm A}_r{\rm B}_r{\rm F}_r$. 
(The shaded faces in Figure \ref{fig10} are glued to ${\rm A}_r{\rm B}_r{\rm F}_r$.) 
In the same way, all the other terms corresponding to the other faces are cancelled each other and the proof follows.

\begin{figure}[h]
\centering
\includegraphics[scale=0.5]{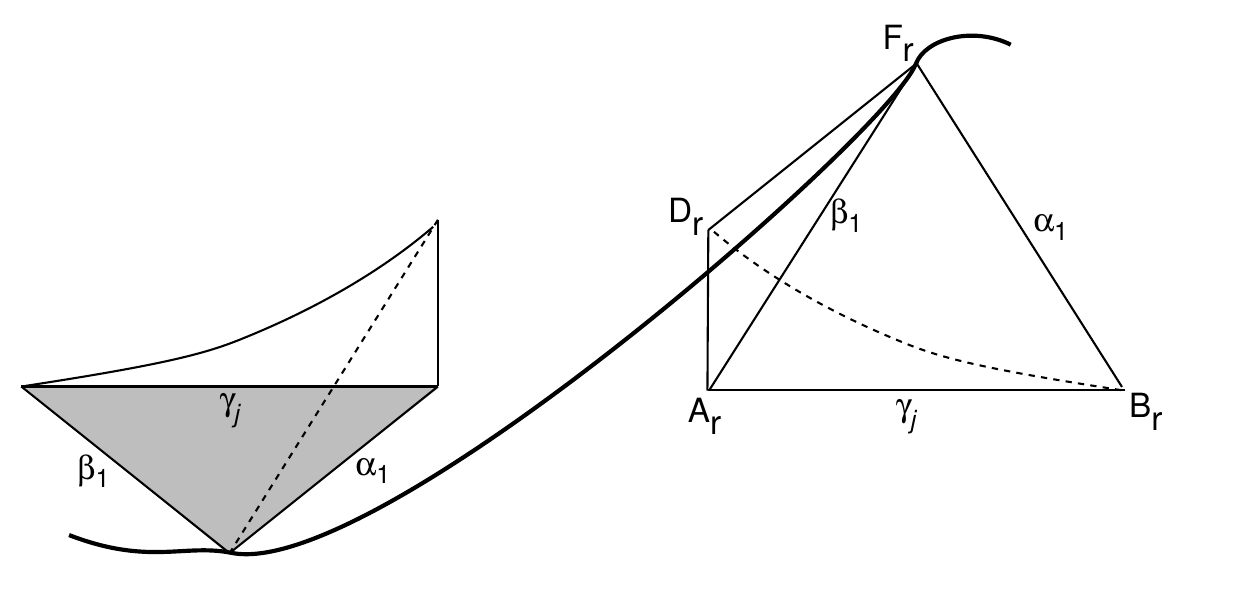}\\
\includegraphics[scale=0.5]{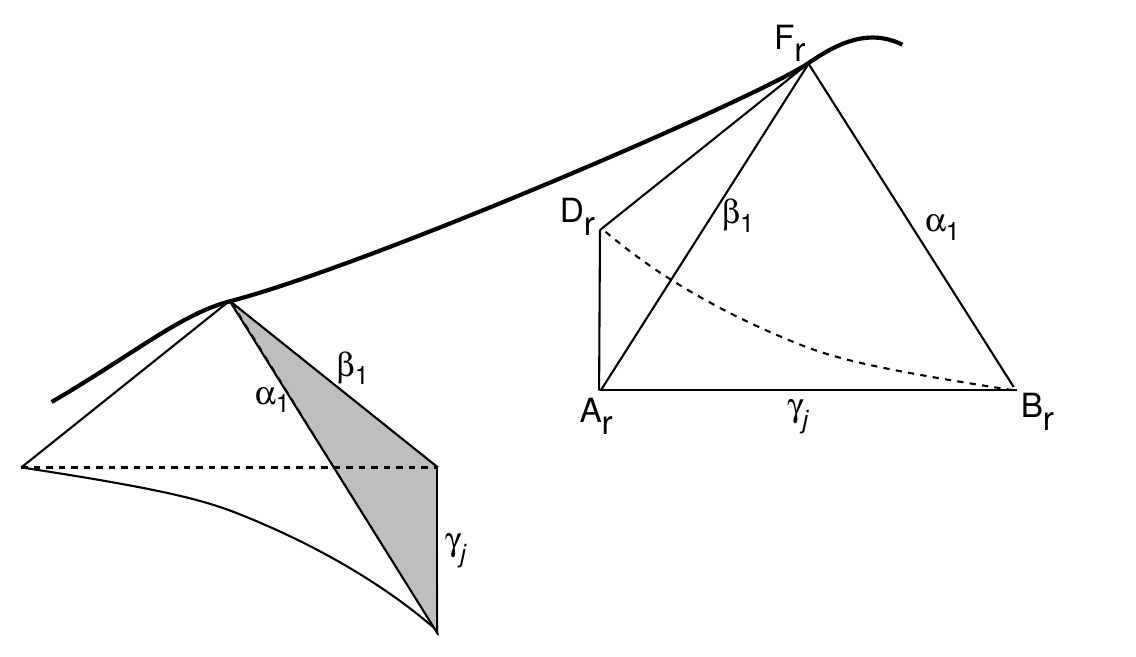}
\caption{Two cases of the gluing of ${\rm A}_r{\rm B}_r{\rm F}_r$}\label{fig10}
\end{figure}

\end{proof}

By combining (\ref{sumQ}) and Lemma \ref{lem42}, or by direct calculation, we have
\begin{equation}\label{eq20}
\sum_{b=1}^n w_b\frac{\partial W}{\partial w_b}=0.
\end{equation}

To obtain (\ref{W1}), we need to use (\ref{formula}) and prove
\begin{equation}\label{goal1}
W(w_1,\ldots,w_n)-\sum_{b=1}^n \left(w_b\frac{\partial W}{\partial w_b}\right)\log w_b\equiv
\sum_{r,c}\sigma_c^{(r)}\widehat{L}\left([u_c^{\sigma_c^{(r)}};p_c^{(r)},q_c^{(r)}]\right)\modulo,
\end{equation}
where $c=1,\ldots,5$ and $r$ is over all crossings. At first, from (\ref{Q1}) and (\ref{Q2}), we have
\begin{eqnarray}
\sum_{a=j,k,l,m}Q_a^{(r)}\pi i \log w_a\equiv -\sigma_1^{(r)}
\left\{q_1^{(r)}\pi i \log\frac{w_m}{w_j}+q_2^{(r)}\pi i \log\frac{w_k}{w_j}-q_3^{(r)}\pi i \log\frac{w_l}{w_k}\right.\nonumber\\
\left.-q_4^{(r)}\pi i \log\frac{w_l}{w_m}+q_5^{(r)}\pi i \log\frac{w_j w_l}{w_k w_m}
+p_1^{(r)}\pi i \log\frac{w_k}{w_j}+p_2^{(r)}\pi i \log\frac{w_m}{w_j}\right\}\nonumber\\
\equiv-\sum_{c=1}^5\sigma_c^{(r)}q_c^{(r)}\pi i\log u_c^{\sigma_c^{(r)}}
-\sigma_1^{(r)}p_1^{(r)}\pi i\log u_2^{\sigma_2^{(r)}}
-\sigma_1^{(r)}p_2^{(r)}\pi i\log u_1^{\sigma_1^{(r)}}~~({\rm mod}~2\pi^2).\label{eq22}
\end{eqnarray}
Combining (\ref{eq22}) and Lemma \ref{lem42}, we obtain
\begin{eqnarray}
\lefteqn{\frac{1}{2}\sum_{r,c}\sigma_c^{(r)}q_c^{(r)}\pi i\log u_c^{\sigma_c^{(r)}}
\equiv-\frac{1}{2}\sum_{b=1}^n\left(w_b\frac{\partial W}{\partial w_b}\right)\log w_b}\nonumber\\
&&-\frac{1}{2}\sum_r\left\{\sigma_1^{(r)}p_1^{(r)}\pi i\log u_2^{\sigma_2^{(r)}}
+\sigma_1^{(r)}p_2^{(r)}\pi i\log u_1^{\sigma_1^{(r)}}\right\}
\modulo,\label{qterm}\end{eqnarray}
where $c=1,\ldots,5$ and $r$ is over all crossings.

Let $W^{(r)}$ be the potential function of the crossing $r$, i.e.
\begin{eqnarray*}
W^{(r)}:=\left\{\begin{array}{l} 
W_P\text{ if }r\text{ is a positive crossing,}\\
W_N\text{ if }r\text{ is a negative crossing.}\end{array}
\right.
\end{eqnarray*}
From (\ref{p1}), (\ref{p2}) and direct calculation, we obtain
\begin{eqnarray}
\lefteqn{\sum_{c=1}^5 \sigma_c^{(r)}(p_c^{(r)}\pi i+\log u_c^{\sigma_c^{(r)}})\log(1-u_c^{\sigma_c^{(r)}})}\nonumber\\
&&=\sigma_1^{(r)}\left\{(\log\gamma_m-\log\gamma_j)\log(1-\frac{w_m}{w_j})
+(\log\gamma_k-\log\gamma_j)\log(1-\frac{w_k}{w_j})\right.\nonumber\\
&&~~-(\log\gamma_l-\log\gamma_k)\log(1-\frac{w_l}{w_k})
-(\log\gamma_l-\log\gamma_m)\log(1-\frac{w_l}{w_m})\nonumber\\
&&\left.~~+(\log\gamma_j+\log\gamma_l-\log\gamma_k-\log\gamma_m)\log(1-\frac{w_j w_l}{w_k w_m})\right\}\nonumber\\
&&=-\sum_{a=j,k,l,m}\log\gamma_a\left(w_a\frac{\partial W^{(r)}}{\partial w_a}\right)\nonumber\\
&&~~+\sigma_1^{(r)}(\log\gamma_m-\log\gamma_j)\log\frac{w_k}{w_j}
+\sigma_1^{(r)}(\log\gamma_k-\log\gamma_j)\log\frac{w_m}{w_j}
\nonumber\\
&&=-\sum_{a=j,k,l,m}\log\gamma_a\left(w_a\frac{\partial W^{(r)}}{\partial w_a}\right)\nonumber\\
&&~~+\sigma_1^{(r)}p_1^{(r)}\pi i\log u_2^{\sigma_2^{(r)}}+\sigma_1^{(r)}p_2^{(r)}\pi i\log u_1^{\sigma_1^{(r)}}
+2\log\frac{w_k}{w_j}\log\frac{w_m}{w_j}.\label{pterm}\end{eqnarray}
Using Observation \ref{obs1}, (\ref{eq20}) and 
$$w_b\frac{\partial W}{\partial w_b}\equiv 0~~({\rm mod}~2\pi i),$$
we obtain
\begin{eqnarray}
\sum_{r\text{ : crossings}}\sum_{a=j,k,l,m}\log\gamma_a\left(w_a\frac{\partial W^{(r)}}{\partial w_a}\right)
=\sum_{b=1}^n\log\gamma_b\left(w_b\frac{\partial W}{\partial w_b}\right)\nonumber\\
\equiv\sum_{b=1}^n\left(w_b\frac{\partial W}{\partial w_b}\right)\log w_b~~~~({\rm mod}~2\pi^2).\label{eq24}
\end{eqnarray}

From (\ref{qterm}), (\ref{pterm}) and (\ref{eq24}), we have
\begin{eqnarray}\label{fterm1}
\frac{1}{2}\sum_{r,c}\sigma_c^{(r)}\left\{q_c^{(r)}\pi i\log u_c^{\sigma_c^{(r)}}
+(p_c^{(r)}\pi i+\log u_c^{\sigma_c^{(r)}})\log(1-u_c^{\sigma_c^{(r)}})\right\}\nonumber\\
\equiv-\sum_{b=1}^n\left(w_b\frac{\partial W}{\partial w_b}\right)\log w_b
+\sum_r\log u_1^{\sigma_1^{(r)}}\log u_2^{\sigma_2^{(r)}}\modulo,
\end{eqnarray}
where $c=1,\ldots,5$ and $r$ is over all crossings.

By definition, the potential function $W(w_1,\ldots,w_n)$ is expressed by
\begin{equation}\label{fterm2}
W(w_1,\ldots,w_n)=\sum_{r,c}\sigma_c^{(r)}\left\{\li(u_c^{\sigma_c^{(r)}})-\frac{\pi^2}{6}\right\}
+\sum_r\log u_1^{\sigma_1^{(r)}}\log u_2^{\sigma_2^{(r)}}.
\end{equation}
From (\ref{fterm1}) and (\ref{fterm2}), we obtain (\ref{goal1}) 
and complete the proof of the first part of Theorem \ref{thm1}.

On the other hand, the existence of ${\bold w}_{\infty}$ is guaranteed by \cite{Tillmann13}. (See \cite{Cho13a} for details. 
{Or, if we allow the construction in \cite{Cho14c}, we can construct ${\bold w}_{\infty}$ from the discrete faithful representation
$\rho:\pi_1(L)\rightarrow {\rm PSL}(2,\mathbb{C})$.})
Then we can choose $\mathcal{T}_{0}$ the path component containing ${\bold w}_{\infty}$.
This completes the proof of Theorem \ref{thm1}.

\section{The optimistic limit of the Kashaev invariant}\label{sec5}

To prove Theorem \ref{thm2}, we briefly review the results of \cite{Cho13a}.

Consider a hyperbolic link $L$ and its non-oriented diagram $D$. 
(If $D$ already has an orientation, then we ignore it.)
Assume $D$ does not have any kinks\footnote{
This assumption is only for the optimistic limit of the Kashaev invariant. If the diagram has a kink, 
then the hyperbolicity equations in $\mathcal{H}$ defined in (\ref{defH2}) do not have any solution. 
On the other hand, the hyperbolicity equations in $\mathcal{I}$ always have a solution whether it has a kink or not.} 
by removing them as in Figure \ref{kink}.

We assign complex variables $z_1,\ldots,z_g$ to sides of the diagram.
Then we define the potential function of the crossing as in Figure \ref{potentialV}.

\begin{figure}[h]
\centering
  \includegraphics[scale=0.3]{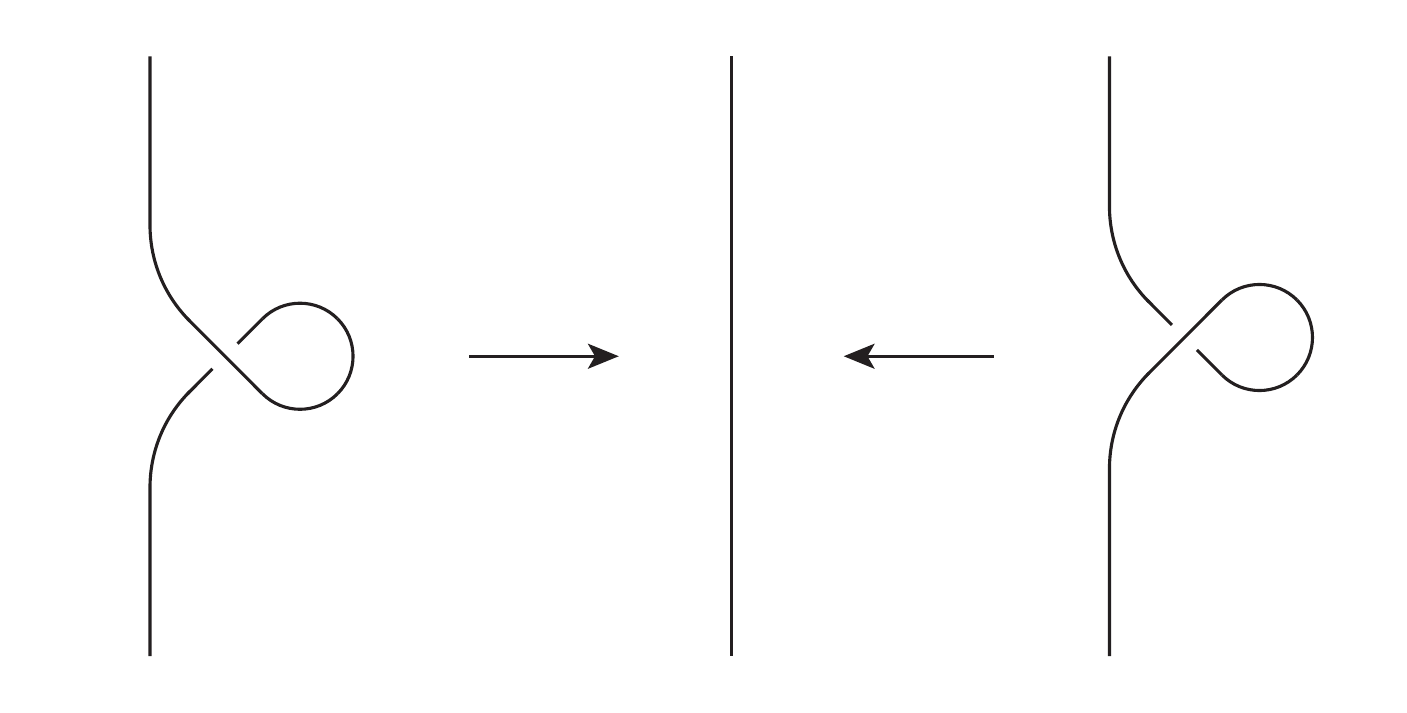}
  \caption{Removing kinks}\label{kink}
\end{figure}

\begin{figure}[h]\centering
{\setlength{\unitlength}{0.4cm}
  \begin{picture}(30,6)\thicklines
    \put(6,5){\line(-1,-1){4}}
    \put(2,5){\line(1,-1){1.8}}
    \put(4.2,2.8){\line(1,-1){1.8}}
    \put(1,5.3){$z_d$}
    \put(6,5.3){$z_c$}
    \put(1,0.2){$z_a$}
    \put(6,0.2){$z_b$}
    \put(7,3){$\displaystyle\longrightarrow ~~\li(\frac{z_b}{z_a})-\li(\frac{z_b}{z_c})+\li(\frac{z_d}{z_c})-\li(\frac{z_d}{z_a})$}
  \end{picture}}
  \caption{Potential function of a crossing}\label{potentialV}
\end{figure}

The potential function $V(z_1,\ldots,z_g)$ of the diagram $D$ is defined by the summation of all potential functions of the crossings.
Then we define the set $\mathcal{H}$ by
\begin{equation}\label{defH2}
\mathcal{H}:=\left\{\left.\exp\left(z_k\frac{\partial V}{\partial z_k}\right)=1\right|k=1,\ldots,g\right\}.
\end{equation}

Let $\mathcal{S}=\{(z_1,\ldots,z_g)\}$ be the set of solutions\footnote{
As already mentioned in Section \ref{sec1}, we only consider solutions satisfying the condition that,
when the potential function $V$ is expressed by $V(z_1,\ldots,z_g)=\sum\pm\li(\frac{z_a}{z_b})$,
the variable inside the dilogarithms satisfy $\frac{z_a}{z_b}\notin\{0,1,\infty\}$.
{ Furthermore, for the crossing in Figure \ref{parameterizing},
the solution should satisfy $\frac{z_c}{z_a}\neq 1$ and $\frac{z_d}{z_b}\neq 1$. The later condition, which the author missed in
his previous paper \cite{Cho13a}, is needed to avoid
the holonomies induced by the meridians becoming the trivial map.}} of $\mathcal{H}$ in $\mathbb{C}^g$.
We always assume $\mathcal{S}\neq\emptyset$.
Note that we cannot avoid this assumption because, if the diagram contains the left-hand side of Figure \ref{nosolution},
then $\mathcal{S}=\emptyset$, {but $\mathcal{T}\neq\emptyset$}. (See \cite{Cho13a} {and \cite{Cho14c}} for details.) 


\begin{figure}[h]
\centering
  \includegraphics[scale=0.8]{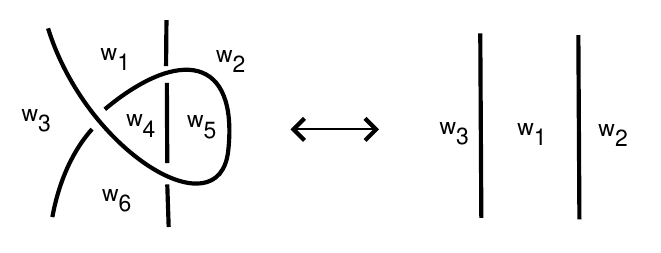}
  \caption{Diagram with $\mathcal{S}=\emptyset$ and $\mathcal{T}\neq\emptyset$}\label{nosolution}
\end{figure}

Recall the four-term triangulation of $\mathbb{S}^3\backslash (L\cup\{\pm\infty\})$ was defined in Section \ref{sec2}.
To determine the shape of tetrahedra, we assign shape parameters $\frac{z_b}{z_a}$, $\frac{z_c}{z_b}$,
$\frac{z_d}{z_c}$ and $\frac{z_a}{z_d}$ to the horizontal edges
${\rm A}_k{\rm B}_k$, ${\rm B}_k{\rm C}_k$, ${\rm C}_k{\rm D}_k$ and ${\rm D}_k{\rm A}_k$ respectively.
(See Figure \ref{parameterizing}.)
Then we obtain the following proposition, which was Proposition 1.1 of \cite{Cho13a}.

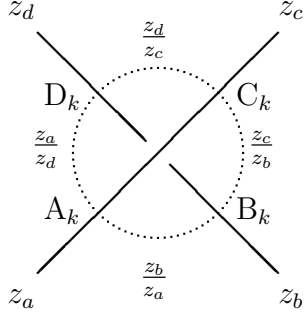
\begin{figure}[h]\centering
\begin{picture}(6,5)  
  \setlength{\unitlength}{0.8cm}\thicklines
        \put(4,3){\arc[5](1,1){360}}
    \put(6,5){\line(-1,-1){4}}
    \put(2,5){\line(1,-1){1.8}}
    \put(4.2,2.8){\line(1,-1){1.8}}
    \put(3.7,0.8){$\frac{z_b}{z_a}$}
    \put(5.5,3){$\frac{z_c}{z_b}$}
    \put(3.7,4.8){$\frac{z_d}{z_c}$}
    \put(1.9,3){$\frac{z_a}{z_d}$}
    \put(1.5,5.3){$z_d$}
    \put(6,5.3){$z_c$}
    \put(1.5,0.5){$z_a$}
    \put(6,0.5){$z_b$}
    \put(2.1,1.9){${\rm A}_k$}
    \put(5.3,1.9){${\rm B}_k$}
    \put(5.3,3.8){${\rm C}_k$}
    \put(2.1,3.8){${\rm D}_k$}
  \end{picture}\caption{Parametrizing tetrahedra}\label{parameterizing}
  \end{figure}

\begin{pro} For a hyperbolic link $L$ with a fixed diagram, 
consider the potential function $V(z_1,\ldots,z_g)$ of the diagram.
Then the set $\mathcal{H}$ defined in (\ref{defH2}) becomes 
the hyperbolicity equations of the four-term triangulation of $\mathbb{S}^3\backslash (L\cup\{\pm\infty\})$.
\end{pro}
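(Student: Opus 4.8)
The plan is to repeat the sketch of Proposition \ref{pro1} almost verbatim, replacing the five-term subdivision of the octahedron by the four-term one and the region variables $w_a$ by the side variables $z_k$. First I would sort the edges of the four-term triangulation into three families: the \emph{horizontal} edges ${\rm A}_r{\rm B}_r$, ${\rm B}_r{\rm C}_r$, ${\rm C}_r{\rm D}_r$, ${\rm D}_r{\rm A}_r$, which glue together around the regions of $D$; the \emph{axis} edges ${\rm E}_r{\rm F}_r$, one per crossing; and the \emph{slant} edges (those joining a vertex ${\rm A}_r,{\rm B}_r,{\rm C}_r,{\rm D}_r$ to ${\rm E}_r$ or ${\rm F}_r$), including the twist identifications ${\rm A}_r{\rm E}_r={\rm C}_r{\rm E}_r$, ${\rm B}_r{\rm F}_r={\rm D}_r{\rm F}_r$ and all edges glued to them along the sides of $D$. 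Since opposite edges of an ideal tetrahedron carry the same shape parameter, the parametrization of Figure \ref{parameterizing} assigns to the axis edge ${\rm E}_r{\rm F}_r$, as seen from the four tetrahedra ${\rm A}_r{\rm B}_r{\rm E}_r{\rm F}_r,\ldots,{\rm D}_r{\rm A}_r{\rm E}_r{\rm F}_r$, the shape parameters $\tfrac{z_b}{z_a},\tfrac{z_c}{z_b},\tfrac{z_d}{z_c},\tfrac{z_a}{z_d}$, whose product is
\[
\frac{z_b}{z_a}\cdot\frac{z_c}{z_b}\cdot\frac{z_d}{z_c}\cdot\frac{z_a}{z_d}=1 ,
\]
so the gluing equation of every axis edge holds automatically. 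Likewise, each horizontal edge is the $u$-edge of a single tetrahedron and faces one region, so around a regional edge the shape parameters are exactly the ratios of consecutive sides and their product telescopes to $1$; hence the gluing equations of the regional edges also hold trivially.

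It then remains to match the equations of $\mathcal{H}$ with the gluing equations of the slant edges. The key computation is the direct evaluation of the logarithmic derivative. Using $\tfrac{d}{dw}\li(w)=-\tfrac{\log(1-w)}{w}$ on the crossing potential of Figure \ref{potentialV}, the corner variable at ${\rm A}_r$ gives
\[
\exp\!\left(z_a\frac{\partial}{\partial z_a}\Bigl(\li(\tfrac{z_b}{z_a})-\li(\tfrac{z_b}{z_c})+\li(\tfrac{z_d}{z_c})-\li(\tfrac{z_d}{z_a})\Bigr)\right)=\frac{z_a-z_b}{z_a-z_d}.
\]
Writing $u_1=\tfrac{z_b}{z_a}$ and $u_4=\tfrac{z_a}{z_d}$ for the horizontal edges ${\rm A}_r{\rm B}_r$ and ${\rm D}_r{\rm A}_r$, one checks $\tfrac{z_a-z_b}{z_a-z_d}=\tfrac{1}{u_1'u_4''}$, so each local factor is a product of shape parameters of the slant edges incident to ${\rm A}_r$. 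Summing over the two crossings at the ends of the side $z_k$ and incorporating the twist identifications, I expect $\exp\!\left(z_k\tfrac{\partial V}{\partial z_k}\right)$ to become precisely the product of all shape parameters around the slant edge lying over $z_k$; thus $\exp\!\left(z_k\tfrac{\partial V}{\partial z_k}\right)=1$ is exactly that gluing equation, and as $z_k$ ranges over the $g$ sides these exhaust the slant family.

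Finally I would verify the completeness condition exactly as in the sketch of Proposition \ref{pro1}: the three gluing patterns of Figure \ref{glue pattern} yield an annular cusp cross-section whose meridian already acts as a translation, so once all gluing equations hold the resulting $\rho_{\bold z}$ is boundary-parabolic. Combining the three families then shows that $\mathcal{H}$ is equivalent to the full set of hyperbolicity equations of the four-term triangulation, as claimed.

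I expect the main obstacle to be the identification in the second paragraph: keeping careful track of which slant edges are glued to which as one follows a side across the diagram, distinguishing the alternating and non-alternating gluings of Figure \ref{glue pattern}, and controlling the branch choices so that the $u\,u'u''=-1$ factors and the attendant signs cancel to leave the clean product equal to $1$. This bookkeeping is routine but delicate, and it is exactly the point at which the four-term subdivision departs from the five-term one treated in Proposition \ref{pro1}.
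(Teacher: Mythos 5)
Your edge classification and the two ``trivial'' families are fine: the axis product $\frac{z_b}{z_a}\frac{z_c}{z_b}\frac{z_d}{z_c}\frac{z_a}{z_d}=1$, the telescoping product around each regional (horizontal) edge, and the local computation $\exp\bigl(z_a\frac{\partial}{\partial z_a}(\cdots)\bigr)=\frac{z_a-z_b}{z_a-z_d}=\frac{1}{u_1'u_4''}$ are all correct. The gap is exactly at the step you flagged as ``routine but delicate'' bookkeeping, and it is not repairable as planned: $\exp\bigl(z_k\frac{\partial V}{\partial z_k}\bigr)=1$ is \emph{not} the gluing equation of a slant edge, because there is no single slant edge class ``lying over $z_k$.'' Trace the identifications of Figure \ref{glue pattern}(a): along a side that is under at crossing $r$ (vertex ${\rm B}_r$) and over at crossing $r+1$ (vertex ${\rm C}_{r+1}$), the glued slant edges split into the two classes $\{{\rm A}_r{\rm E}_r={\rm C}_r{\rm E}_r\sim{\rm C}_{r+1}{\rm F}_{r+1},\ldots\}$ and $\{{\rm B}_r{\rm E}_r\sim{\rm B}_{r+1}{\rm F}_{r+1}={\rm D}_{r+1}{\rm F}_{r+1},\ldots\}$, and each class continues through the \emph{other} sides of $r$, respectively $r+1$ (it closes up only where the relevant strand changes between over and under). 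Hence every slant class contains the twist pair of one crossing together with terminal edges at two further crossings, and its gluing equation is a product of at least eight shape parameters from at least three octahedra, whereas your equation involves only four parameters from the two end octahedra; the two cannot coincide under any orientation convention. Worse, for a non-alternating diagram the number of slant classes equals the number of sides minus the number of non-alternating sides, so the hoped-for bijection ``side $\leftrightarrow$ slant class'' does not even exist.

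What $\exp\bigl(z_k\frac{\partial V}{\partial z_k}\bigr)=1$ actually says is this: its two local factors are the product of the two shape parameters around a slant edge at the under end (inside its octahedron) and the reciprocal of the product around a slant edge at the over end (inside its octahedron), and these two edges lie in \emph{different} slant classes; their equality is precisely the completeness condition along $z_k$, i.e.\ that the holonomy around the core of the annular cusp cross-section over that side has derivative $1$. This also kills your final step: unlike the five-term triangulation of Proposition \ref{pro1}, where completeness is automatic and $\mathcal{I}$ consists of genuine gluing equations (of the horizontal edges), in the four-term triangulation the roles are reversed --- the horizontal equations are the automatic ones, the slant gluing equations arise only as \emph{products of pairs} of equations of $\mathcal{H}$ (the two sides of the same strand at a crossing, combined via $uu'u''=-1$ and the axis identity $u_1u_2u_3u_4=1$), and completeness is the nontrivial content of $\mathcal{H}$. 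If your identification were right, $\mathcal{H}$ would be equivalent to the gluing equations alone; but those only force the products $h_{\zeta}h_{\zeta'}=1$, where $h_\zeta:=\exp\bigl(z_\zeta\frac{\partial V}{\partial z_\zeta}\bigr)$, and do not force each $h_\zeta=1$. Note that this paper does not reprove the statement (it cites Proposition 1.1 of \cite{Cho13a}); the proof there couples the gluing equations with the cusp (meridian) conditions in the way just described, which is the ingredient missing from your argument.
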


By using Yoshida's construction in Section 4.5 of \cite{Tillmann13}, for a solution 
${\bold z}=(z_1,\ldots,z_g)\in\mathcal{S}$,
we can obtain a boundary-parabolic representation
\begin{equation}\label{rhoz}
\rho_{\bold{z}}:\pi_1(\mathbb{S}^3\backslash (L\cup\{\pm\infty\}))=\pi_1(\mathbb{S}^3\backslash L)\longrightarrow{\rm PSL}(2,\mathbb{C}).
\end{equation}

For the solution set $\mathcal{S}$, let $\mathcal{S}_j$ be a path component of $\mathcal{S}$ 
satisfying $\mathcal{S}=\cup_{j\in J'}\mathcal{S}_j$ for some index set $J'$. We assume $0\in J'$ for notational convenience. 
To obtain well-defined values from the potential function $V(z_1,\ldots,z_g)$, we slightly modify it  to
\begin{equation*}
V_0(z_1,\ldots,z_g):=V(z_1,\ldots,z_g)-\sum_{k=1}^n \left(z_k\frac{\partial V}{\partial z_k}\right)\log z_k.
\end{equation*}
Then we obtain the main result of \cite{Cho13a} as follows:

\begin{thm}\label{thm3}
Let $L$ be a hyperbolic link with a fixed diagram and  $V(z_1,\ldots,z_g)$ be the potential function of the diagram. 
Assume the solution set $\mathcal{S}=\cup_{j\in J'}\mathcal{S}_j$ is not empty.
Then, for any ${\bold z}\in \mathcal{S}_j$, $V_0({\bold z})$ is constant (depends only on $j$) and
\begin{equation*}
V_0(\bold{z})\equiv i\,(\vol(\rho_{\bold z})+i\,\cs(\rho_{\bold z}))\modulo,
\end{equation*}
where $\rho_{\bold z}$ is the boundary-parabolic representation in (\ref{rhoz}).
Furthermore, there exists a path component $\mathcal{S}_0$ of $\mathcal{S}$ satisfying
\begin{equation*}
V_0(\bold{z_{\infty}})\equiv i\,(\vol(L)+i\,\cs(L))\modulo,
\end{equation*}
for all ${\bold z}_{\infty}\in \mathcal{S}_0$.
\end{thm}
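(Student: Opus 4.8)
The plan is to transport the extended Bloch group computation of Section~\ref{sec4} to the four-term triangulation; since Theorem~\ref{thm3} is the Kashaev counterpart of Theorem~\ref{thm1}, each step has an analog, only with four tetrahedra per crossing instead of five. First I would settle the constancy claim exactly as in Lemmas~\ref{branch} and~\ref{lem1}: the defining set $\mathcal{H}$ in~(\ref{defH2}) forces $z_k\frac{\partial V}{\partial z_k}\in 2\pi i\,\mathbb{Z}$ on any solution, so $V_0$ is invariant under branch changes modulo $4\pi^2$ and its differential vanishes along each path component $\mathcal{S}_j$; hence $V_0\equiv C_j$ is locally constant. Because the representation $\rho_{\mathbf z}$, and therefore $\vol(\rho_{\mathbf z})+i\,\cs(\rho_{\mathbf z})$, also varies only discretely along $\mathcal{S}_j$, it suffices to verify the congruence at a single point of each component.

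Next I would build the pre-Bloch element. Fix a vertex ordering of the four tetrahedra ${\rm A}_k{\rm B}_k{\rm E}_k{\rm F}_k$, ${\rm B}_k{\rm C}_k{\rm E}_k{\rm F}_k$, ${\rm C}_k{\rm D}_k{\rm E}_k{\rm F}_k$ and ${\rm D}_k{\rm A}_k{\rm E}_k{\rm F}_k$ of each octahedron so that the edge-orientation consistency holds, exactly as arranged for the five-term case in Figure~\ref{fig9}; this is what licenses Zickert's formula. Attach edge labels (the analogs of $\gamma_a,\alpha_i,\beta_i,\delta_i$) compatibly with the gluings, with the horizontal-edge label $\gamma_a$ placed in the region carrying $z_a$. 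For each tetrahedron read off its sign $\sigma$ and its $01$-edge shape parameter, one of $\frac{z_b}{z_a},\frac{z_c}{z_b},\frac{z_d}{z_c},\frac{z_a}{z_d}$, and compute the integers $p,q$ from~(\ref{pq}), yielding the element $\sum_{r,c}\sigma_c^{(r)}[u_c^{\sigma_c^{(r)}};p_c^{(r)},q_c^{(r)}]\in\widehat{\mathcal{P}}(\mathbb{C})$.

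The heart of the proof is then the analog of Lemma~\ref{lem42}, namely $z_b\frac{\partial V}{\partial z_b}=\sum_r Q_b^{(r)}\pi i$ for suitable integers $Q_b^{(r)}$ assembled from the $p$'s and $q$'s with $\sum_a Q_a^{(r)}=0$ per crossing. Granting this, I would assemble $\widehat{L}$ of the summed element: the $q\pi i\log u$ terms reproduce $-\sum_b\left(z_b\frac{\partial V}{\partial z_b}\right)\log z_b$ after replacing $\log\gamma_b$ by $\log z_b$ modulo $\pi i$ (the analog of Observation~\ref{obs1}), while the $(p\pi i+\log u)\log(1-u)$ terms reproduce the dilogarithm sum $V$. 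A pleasant simplification over Section~\ref{sec4} is that the four dilogarithms of Figure~\ref{potentialV} carry alternating signs $+,-,+,-$, so the four $-\frac{\pi^2}{6}$ constants cancel in pairs and no residual $\log\cdot\log$ correction survives, unlike the leftover term in~(\ref{fterm1})--(\ref{fterm2}). This gives
\begin{equation*}
V_0(\mathbf z)\equiv\widehat{L}\!\left(\sum_{r,c}\sigma_c^{(r)}[u_c^{\sigma_c^{(r)}};p_c^{(r)},q_c^{(r)}]\right)\modulo,
\end{equation*}
and Zickert's formula~(\ref{formula}) then delivers the desired congruence with $i(\vol(\rho_{\mathbf z})+i\,\cs(\rho_{\mathbf z}))$, valid for the four-term triangulation of $\mathbb{S}^3\backslash(L\cup\{\pm\infty\})$ by Thurston's spinning construction.

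For the final assertion I would invoke $\mathcal{S}\neq\emptyset$ together with the existence, guaranteed by \cite{Tillmann10}, of the essential solution $\mathbf z_\infty$ realizing the complete hyperbolic structure, and take $\mathcal{S}_0$ to be its path component; its complex volume is $\vol(L)+i\,\cs(L)$ by construction. The main obstacle is the Lemma~\ref{lem42} analog and the cancellation it feeds: one must choose the vertex orderings and edge labels so that consistency holds and then check, face by face as in Section~\ref{sec4}, that every $\log\gamma,\log\alpha,\log\beta,\log\delta$ term cancels across glued faces, leaving precisely $V_0$. Pinning down the signs $\sigma_c^{(r)}$ and the integer combinations $Q_b^{(r)}$ for all three gluing patterns is the delicate bookkeeping, though conceptually it is routine given the five-term template just established.
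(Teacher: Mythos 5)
Your proposal is correct and follows essentially the paper's own approach: Theorem \ref{thm3} is quoted from \cite{Cho13a}, whose proof is exactly this transport of Zickert's formula (\ref{formula}) to the four-term triangulation --- vertex orderings satisfying edge-orientation consistency, the analog of Observation \ref{obs1} and Lemma \ref{lem42}, and the resulting identification of $V_0({\bold z})$ with $\widehat{L}\left(\sum\sigma[u^{\sigma};p,q]\right)$ --- which is the same template the paper carries out in Section \ref{sec4} for the five-term case. Your two structural observations (the alternating signs cancel the $-\frac{\pi^2}{6}$ constants, and no residual $\log\cdot\log$ correction appears since $V$ has no such terms) are indeed the simplifications that occur in that argument.
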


We call the value $V_0(\bold{z})$ {\it the optimistic limit of the Kashaev invariant}. Note that
it depends on the choice of the diagram and the path component $\mathcal{S}_{j}$.

\section{Proof of Theorem \ref{thm2}}\label{sec6}

This section is devoted to the proof of Theorem \ref{thm2}. Note that it was {almost} proved in \cite{Cho13b},
so we will skip several calculations and refer the results in \cite{Cho13b}.

To avoid redundant calculations, we change the definition of $W_N$ in Figure \ref{pic2} to the below:
\begin{equation}\label{W_N}
W_N:=\li(\frac{w_l}{w_m})+\li(\frac{w_l}{w_k})-\li(\frac{w_j w_l}{w_k w_m})-\li(\frac{w_m}{w_j})-\li(\frac{w_k}{w_j})
+\frac{\pi^2}{6}-\log\frac{w_j}{w_m}\log\frac{w_j}{w_k}.
\end{equation}
It is possible because, {by using $\approx$ to denote the equivalence relation defined} in Lemma 3.1 of \cite{Cho13b}, we know
$$\log\frac{w_j}{w_m}\log\frac{w_j}{w_k}\approx(\log w_j-\log w_m)(\log w_j-\log w_k)
\approx\log\frac{w_m}{w_j}\log\frac{w_k}{w_j}.$$
Therefore, changing $\log\frac{w_m}{w_j}\log\frac{w_k}{w_j}$ of $W_N$ to $\log\frac{w_j}{w_m}\log\frac{w_j}{w_k}$
does not have any effect on $\mathcal{I}$ and the optimistic limit $W_0({\bold w})$.

\begin{lem}\label{lem61}
Fix an oriented diagram $D$ of the hyperbolic link $L$, which does not have a kink.
For a solution $\bold{w}=(w_1,\ldots,w_n)\in\mathcal{T}$, 
if the variables $w_j,\ldots,w_m$ in Figure \ref{label} satisfy
\begin{equation}\label{solw}
w_j+w_l\neq w_k+w_m
\end{equation}
at all crossings, then there exists a solution $\bold{z}\in\mathcal{S}$ satisfying $\rho_{\bold w}=\rho_{\bold z}$.
Inversely, for a solution $\bold{z}=(z_1,\ldots,z_g)\in\mathcal{S}$, 
{there always exists a solution $\bold{w}\in\mathcal{T}$ satisfying $\rho_{\bold z}=\rho_{\bold w}$.}
\end{lem}

\begin{proof}
For a hyperbolic ideal octahedron in Figure \ref{fig15}, 
we assign shape parameters $t_1$, $t_2$, $t_3$, $t_4$, $u_1$, $u_2$, $u_3$ and $u_4$
to the edges CD, DA, AB, BC, CF, DE, AF and BE respectively. 
Let $u_5:=\frac{1}{u_1u_3}=\frac{1}{u_2u_4}$ be the shape parameter of the tetrahedron ABCD
assigned to the edges AC and BD.

\begin{figure}[h]
\centering
\includegraphics[scale=0.6]{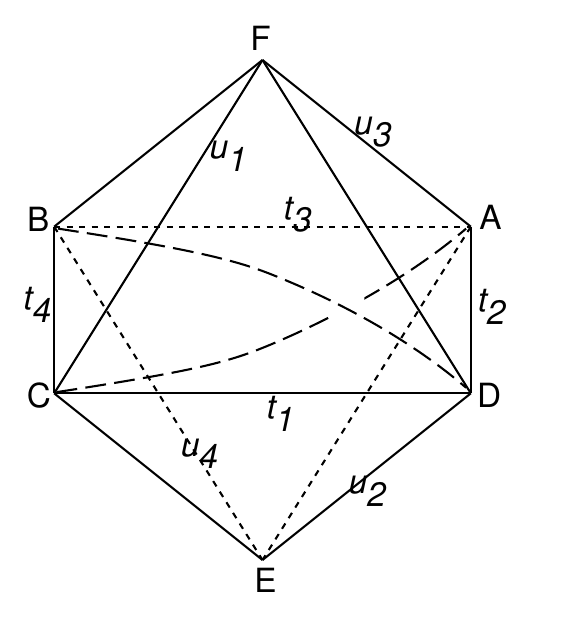}
  \caption{Assignment of variables}\label{fig15}
\end{figure}
Then we obtain the following relations.
\begin{eqnarray}
      \left\{
      \begin{array}{ll}
     t_1=u_1'' u_2'' u_5',\\
     t_2=u_2' u_3' u_5'',\\
     t_3=u_3'' u_4'' u_5',\\
     t_4=u_4' u_1' u_5'',
      \end{array}\right.\label{54term}\hspace{1cm}
      \left\{
      \begin{array}{ll}
     u_1=t_1' t_4'',\\
     u_2=t_1' t_2'',\\
     u_3=t_3' t_2'',\\
     u_4=t_3' t_4'',\\
     u_5=\left(t_1't_2''t_3't_4''\right)^{-1}.
      \end{array}\right.
\end{eqnarray}

Now we consider the octahedra placed on the crossings in Figure \ref{fig7}.
Note that the five-term triangulation and the four-term triangulation 
use the same octahedral decomposition of $\mathbb{S}^3\backslash(L\cup\{\pm\infty\})$, but the subdividing methods are different.
Therefore, if we apply (\ref{54term}) to the octahedral decomposition, we can find relations between variables $w_1,\dots,w_n$
and $z_1,\ldots,z_g$. The octahedron on Figure \ref{label}(a) (or the one in Figure \ref{fig7}(a)) gives the relations
\begin{eqnarray}
      \left\{
      \begin{array}{ll}\displaystyle
\frac{z_b}{z_a}=\left(\frac{w_m}{w_j}\right)''\left(\frac{w_k}{w_j}\right)''\left(\frac{w_j w_l}{w_k w_m}\right)',~
\frac{z_c}{z_b}=\left(\frac{w_k}{w_j}\right)'\left(\frac{w_k}{w_l}\right)'\left(\frac{w_j w_l}{w_k w_m}\right)'',
\vspace{0.2cm}\\\displaystyle
\frac{z_d}{z_c}=\left(\frac{w_k}{w_l}\right)''\left(\frac{w_m}{w_l}\right)''\left(\frac{w_j w_l}{w_k w_m}\right)',~
\frac{z_a}{z_d}=\left(\frac{w_m}{w_l}\right)'\left(\frac{w_m}{w_j}\right)'\left(\frac{w_j w_l}{w_k w_m}\right)'' ,
      \end{array}\right.\label{t1}
\end{eqnarray} and
\begin{eqnarray}
      \left\{
      \begin{array}{ll}\displaystyle
\frac{w_m}{w_j}=\left(\frac{z_b}{z_a}\right)'\left(\frac{z_a}{z_d}\right)'' ,~
\frac{w_k}{w_j}=\left(\frac{z_b}{z_a}\right)'\left(\frac{z_c}{z_b}\right)'' ,~
\frac{w_k}{w_l}=\left(\frac{z_d}{z_c}\right)'\left(\frac{z_c}{z_b}\right)'' ,\vspace{0.2cm}\\\displaystyle
\frac{w_m}{w_l}=\left(\frac{z_d}{z_c}\right)'\left(\frac{z_a}{z_d}\right)'',~
\frac{w_j w_l}{w_k w_m}=\left(\frac{z_a}{z_b}\right)''\left(\frac{z_b}{z_c}\right)'
\left(\frac{z_c}{z_d}\right)''\left(\frac{z_d}{z_a}\right)'.
      \end{array}\right.\label{u1}
\end{eqnarray} 
The octahedron on Figure \ref{label}(b) (or the one in Figure \ref{fig7}(b)) gives the relations
\begin{eqnarray}
      \left\{
      \begin{array}{ll}\displaystyle
\frac{z_b}{z_a}=\left(\frac{w_j}{w_m}\right)'\left(\frac{w_j}{w_k}\right)'\left(\frac{w_k w_m}{w_j w_l}\right)'',~
\frac{z_c}{z_b}=\left(\frac{w_j}{w_k}\right)''\left(\frac{w_l}{w_k}\right)''\left(\frac{w_k w_m}{w_j w_l}\right)',
\vspace{0.2cm}\\\displaystyle
\frac{z_d}{z_c}=\left(\frac{w_l}{w_k}\right)'\left(\frac{w_l}{w_m}\right)'\left(\frac{w_k w_m}{w_j w_l}\right)'',~
\frac{z_a}{z_d}=\left(\frac{w_l}{w_m}\right)''\left(\frac{w_j}{w_m}\right)''\left(\frac{w_k w_m}{w_j w_l}\right)',
      \end{array}\right.\label{t2}
\end{eqnarray} and
\begin{eqnarray}
      \left\{
      \begin{array}{ll}\displaystyle
\frac{w_j}{w_m}=\left(\frac{z_a}{z_d}\right)'\left(\frac{z_b}{z_a}\right)'',~
\frac{w_j}{w_k}=\left(\frac{z_c}{z_b}\right)'\left(\frac{z_b}{z_a}\right)'',~
\frac{w_l}{w_k}=\left(\frac{z_c}{z_b}\right)'\left(\frac{z_d}{z_c}\right)'', \vspace{0.2cm}\\\displaystyle
\frac{w_l}{w_m}=\left(\frac{z_a}{z_d}\right)'\left(\frac{z_d}{z_c}\right)'',~
\frac{w_k w_m}{w_j w_l}=\left(\frac{z_a}{z_b}\right)'\left(\frac{z_b}{z_c}\right)''
\left(\frac{z_c}{z_d}\right)'\left(\frac{z_d}{z_a}\right)''.
      \end{array}\right.\label{u2}
\end{eqnarray}

If $w_j,\ldots,w_m$ of each crossing is fixed, then we can determine $z_a,\ldots,z_d$ using (\ref{u1}) and (\ref{u2}),
and the inverse can be done using (\ref{t1}) and (\ref{t2}). 
Furthermore, if we consider ${\bold w}\in\mathbb{CP}^{n-1}$ and ${\bold z}\in\mathbb{CP}^{g-1}$, then $\bold w$ determines
$\bold z$ uniquely, and vice versa. 

For the set of equations
\begin{eqnarray}\label{e1}
      \left\{
      \begin{array}{ll}\displaystyle
\left(\frac{w_m}{w_j}\right)''\left(\frac{w_k}{w_j}\right)''\left(\frac{w_j w_l}{w_k w_m}\right)'\neq1,~
\left(\frac{w_k}{w_j}\right)'\left(\frac{w_k}{w_l}\right)'\left(\frac{w_j w_l}{w_k w_m}\right)''\neq1,
\vspace{0.2cm}\\\displaystyle
\left(\frac{w_k}{w_l}\right)''\left(\frac{w_m}{w_l}\right)''\left(\frac{w_j w_l}{w_k w_m}\right)'\neq1,~
\left(\frac{w_m}{w_l}\right)'\left(\frac{w_m}{w_j}\right)'\left(\frac{w_j w_l}{w_k w_m}\right)''\neq1 ,
      \end{array}\right.
\end{eqnarray}
in (\ref{t1}) and 
\begin{eqnarray}\label{e2}
      \left\{
      \begin{array}{ll}\displaystyle
\left(\frac{w_j}{w_m}\right)'\left(\frac{w_j}{w_k}\right)'\left(\frac{w_k w_m}{w_j w_l}\right)''\neq1,~
\left(\frac{w_j}{w_k}\right)''\left(\frac{w_l}{w_k}\right)''\left(\frac{w_k w_m}{w_j w_l}\right)'\neq1,
\vspace{0.2cm}\\\displaystyle
\left(\frac{w_l}{w_k}\right)'\left(\frac{w_l}{w_m}\right)'\left(\frac{w_k w_m}{w_j w_l}\right)''\neq1,~
\left(\frac{w_l}{w_m}\right)''\left(\frac{w_j}{w_m}\right)''\left(\frac{w_k w_m}{w_j w_l}\right)'\neq1,
      \end{array}\right.
\end{eqnarray}
in (\ref{t2}), direct calculation shows (\ref{e1}), (\ref{e2}) and (\ref{solw}) are equivalent each other. 
Therefore, (\ref{solw}) guarantees the determined $\bold z$ is a solution ${\bold z}\in\mathcal{S}$.

Also, for the set of equations
\begin{eqnarray}\label{e3}
      \left\{
      \begin{array}{ll}\displaystyle
\left(\frac{z_b}{z_a}\right)'\left(\frac{z_a}{z_d}\right)''\neq 1,~
\left(\frac{z_b}{z_a}\right)'\left(\frac{z_c}{z_b}\right)''\neq 1,~
\left(\frac{z_d}{z_c}\right)'\left(\frac{z_c}{z_b}\right)''\neq 1,\\\displaystyle
\left(\frac{z_d}{z_c}\right)'\left(\frac{z_a}{z_d}\right)''\neq 1,~
\left(\frac{z_a}{z_b}\right)''\left(\frac{z_b}{z_c}\right)'
\left(\frac{z_c}{z_d}\right)''\left(\frac{z_d}{z_a}\right)'\neq 1,
\end{array}\right.
\end{eqnarray}
in (\ref{u1}) and 
\begin{eqnarray}\label{e4}
      \left\{
      \begin{array}{ll}\displaystyle
\left(\frac{z_a}{z_d}\right)'\left(\frac{z_b}{z_a}\right)''\neq 1,~
\left(\frac{z_c}{z_b}\right)'\left(\frac{z_b}{z_a}\right)''\neq 1,~
\left(\frac{z_c}{z_b}\right)'\left(\frac{z_d}{z_c}\right)''\neq 1,\\\displaystyle
\left(\frac{z_a}{z_d}\right)'\left(\frac{z_d}{z_c}\right)''\neq 1,~
\left(\frac{z_a}{z_b}\right)'\left(\frac{z_b}{z_c}\right)''
\left(\frac{z_c}{z_d}\right)'\left(\frac{z_d}{z_a}\right)''\neq 1
\end{array}\right.
\end{eqnarray}
in (\ref{u2}), direct calculation shows (\ref{e3}), (\ref{e4}) { and $z_a\neq z_c, ~z_b\neq z_d$
are equivalent each other. The latter is the assumption of the solution,
hence any $\bold z\in\mathcal{S}$ determines a solution $\bold w\in\mathcal{T}$.}

Finally, if $\bold z$ and $\bold w$ are related as above, then they determine the same octahedral decomposition
and the same developing map. Therefore, we conclude $\rho_{\bold z}=\rho_{\bold w}$.

\end{proof}

Let $D(z):=\imaginary\li(z)+\log\vert z\vert\arg(1-z)$ be the Bloch-Wigner function for $z\in\mathbb{C}\backslash\{0,1\}$.
It is a well-known fact that $D(z)=\vol(T_z)$, where $T_z$ is the hyperbolic ideal tetrahedron with the shape parameter $z$.
Therefore, from Figure \ref{fig15}, we obtain
\begin{equation}\label{vol}
D(t_1)+D(t_2)+D(t_3)+D(t_4)=D(u_1)+D(u_2)+D(u_3)+D(u_4)+D(u_5).
\end{equation}

Note that the variables $t_1,\ldots,t_4,u_1,\ldots,u_5$ satisfying (\ref{54term}) determine 
a hyperbolic ideal octahedron in Figure \ref{fig15}, so (\ref{54term}) guarantees (\ref{vol}).

\begin{lem}\label{lem62}
Let $t_1, t_2, t_3, t_4, u_1, u_2, u_3, u_4, u_5\notin\{0,1,\infty\}$ be the shape parameters defined in the hyperbolic octahedron
in Figure \ref{fig15}, which satisfies (\ref{54term}) and (\ref{vol}).
Then the following identities hold for any choice of log-branch modulo $4\pi^2$.

\begin{eqnarray*}
\lefteqn{\li(t_1)-\li(\frac{1}{t_2})+\li(t_3)-\li(\frac{1}{t_4})}\\
\lefteqn{~\equiv\li(u_1)+\li(u_2)-\li(\frac{1}{u_3})-\li(\frac{1}{u_4})+\li(u_5)-\frac{\pi^2}{6}+\log u_1\log u_2}\\
  &&-\left(-\log(1-t_1)+\log(1-\frac{1}{t_4})\right)\log u_2-\left(-\log(1-t_1)+\log(1-\frac{1}{t_2})\right)\log u_1\\
  &&+\left(-\log(1-t_1)+\log(1-\frac{1}{t_4})\right)\log(1-{u_1})
  +\left(-\log(1-t_1)+\log(1-\frac{1}{t_2})\right)\log(1-u_2)\\
  &&+\left(-\log(1-t_3)+\log(1-\frac{1}{t_2})\right)\log(1-\frac{1}{u_3})
  +\left(-\log(1-t_3)+\log(1-\frac{1}{t_4})\right)\log(1-\frac{1}{u_4})\\
  &&+\left(\log(1-t_1)-\log(1-\frac{1}{t_2})+\log(1-t_3)-\log(1-\frac{1}{t_4})\right)\log(1-{u_5})\end{eqnarray*}
\begin{eqnarray*}  
\lefteqn{~\equiv\li(u_1)-\li(\frac{1}{u_2})-\li(\frac{1}{u_3})+\li({u_4})-\li(\frac{1}{u_5})+\frac{\pi^2}{6}-\log u_2\log u_3}\\
  &&+\left(-\log(1-t_3)+\log(1-\frac{1}{t_2})\right)\log u_2+\left(-\log(1-t_1)+\log(1-\frac{1}{t_2})\right)\log u_3\\
  &&+\left(-\log(1-t_1)+\log(1-\frac{1}{t_4})\right)\log(1-{u_1})
  +\left(-\log(1-t_1)+\log(1-\frac{1}{t_2})\right)\log(1-\frac{1}{u_2})\\
  &&+\left(-\log(1-t_3)+\log(1-\frac{1}{t_2})\right)\log(1-\frac{1}{u_3})
  +\left(-\log(1-t_3)+\log(1-\frac{1}{t_4})\right)\log(1-{u_4})\\
  &&+\left(\log(1-t_1)-\log(1-\frac{1}{t_2})+\log(1-t_3)-\log(1-\frac{1}{t_4})\right)\log(1-\frac{1}{u_5})~~~\modulos.
\end{eqnarray*}

\end{lem}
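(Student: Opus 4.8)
The plan is to read both displayed congruences as instances of the \emph{lifted five-term (pentagon) relation} for the dilogarithm, realized geometrically by the two different subdivisions of the single ideal octahedron in Figure \ref{fig15}. The four dilogarithms on the left are the contributions of the tetrahedra with shape parameters $t_1,\dots,t_4$, while the five dilogarithms on the right are the contributions of $u_1,\dots,u_5$; since both sets decompose the \emph{same} octahedron, the two sides must agree once the logarithmic correction terms (the analogues of the $\tfrac12 q\pi i\log u+\tfrac12(p\pi i+\log u)\log(1-u)$ terms of $\widehat L$) are properly accounted for. Accordingly I would introduce a single complex-valued function $F:=(\text{LHS})-(\text{RHS})$ on the space of hyperbolic ideal octahedra and aim to show $F\equiv 0\modulos$ for each of the two identities.

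First I would establish that $F$ is invariant under every choice of log-branch modulo $4\pi^2$, exactly in the spirit of Lemma \ref{branch} and Lemma 3.1 of \cite{Cho11}. A branch change replaces some $\log$ by $\log+2\pi i$, and the precise way each correction term is paired with a dilogarithm argument, as dictated by the monomial relations (\ref{54term}), forces the total change to be a multiple of $4\pi^2$. Granting this, it suffices to verify the identity for one convenient branch, so from here on I may differentiate freely with a locally fixed branch.

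Next, to determine $F$ up to a constant, I would differentiate. Using $d\,\li(x)=-\log(1-x)\,d\log x$, the differential of every dilogarithm is a product of logarithms, and the differentials of the explicit $\log\!\cdot\!\log$ correction terms are of the same type. Substituting the relations (\ref{54term}), equivalently (\ref{t1})--(\ref{u2}), which write each $t_i$ as a monomial in $u_1,\dots,u_5$ and their $',''$-partners subject to $u_1u_3=u_2u_4=u_5^{-1}$, turns $dF$ into a rational $1$-form in $d\log u_1,\dots,d\log u_5$ whose coefficients are $\mathbb{Z}$-linear combinations of logs; a direct check shows every coefficient vanishes, so $dF=0$ and $F$ is locally constant. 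Its imaginary part vanishes identically: expanding $\imaginary F$ via the definition $D(z)=\imaginary\li(z)+\log\vert z\vert\arg(1-z)$ collapses it precisely to the volume identity (\ref{vol}), which holds because the $t_i$ and $u_j$ parametrize the same octahedron. Hence only the real part of the constant remains, and I would fix it by evaluating $F$ at one symmetric configuration — for instance the regular ideal octahedron, where the $t_i,u_j$ take explicit values and the dilogarithms reduce to known constants.

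The hardest part is the last-digit bookkeeping in the constant. The Bloch-group formalism and Zickert's $\widehat L$ give such relations only modulo $\pi^2$, whereas the lemma asserts the sharper $4\pi^2$ precision. Obtaining it requires tracking, term by term, the half-integer multiples of $\pi^2$ produced by the $\tfrac12$-weighted log terms together with the $2\pi i$-shifts from branch changes, and verifying that the constraints (\ref{54term}) force them to combine into integer multiples of $4\pi^2$ rather than merely $\pi^2$ or $2\pi^2$. This exact-precision step, where the explicit form of the correction terms and the relation $u_1u_3=u_2u_4=u_5^{-1}$ must be used most carefully, is the real obstacle; the differentiation and the volume identity (\ref{vol}) dispose of everything else.
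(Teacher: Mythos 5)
Your skeleton --- treat $F:=(\text{left side})-(\text{right side})$ as a function on the space of octahedra, show $dF=0$ using (\ref{54term}), then evaluate at one configuration --- is the right species of argument; indeed the paper itself offers no argument at all (its proof of Lemma \ref{lem62} is a one-line citation of Lemma 5.1 of \cite{Cho11}, where the identity is checked by this kind of direct analytic computation), so you are in effect reconstructing the outsourced proof. But your reconstruction has a decisive gap: the lemma asserts the congruences hold \emph{for any choice of log-branch} modulo $4\pi^2$, and you explicitly set this aside at the end as ``the real obstacle.'' That is not residual bookkeeping; it is the statement itself, and it is also a prerequisite for your own globalization step. Fixing branches locally makes $F$ holomorphic and (granting your differential check) locally constant, but to conclude that its value mod $4\pi^2$ is one and the same constant across the whole parameter space --- and hence equal to whatever you compute at your chosen symmetric point --- you must already know that the $2\pi i$-jumps incurred when local branches are patched, and the dilogarithm monodromy $\li(z)\mapsto\li(z)+2\pi in\log z+4\pi^2m$, change $F$ by multiples of $4\pi^2$. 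Concretely, by (\ref{54term}) each coefficient such as $-\log(1-t_1)+\log(1-\frac{1}{t_4})$ equals $\log u_1+2\pi ik$ for some branch-dependent integer $k$, and one must verify term by term that shifting any single logarithm perturbs both sides equally mod $4\pi^2$. Until that is done, nothing modulo $4\pi^2$ has been proven. (Your invocation of Zickert's $\widehat{L}$ and its ``half-integer multiples of $\pi^2$'' is a red herring: the correction terms in this lemma have integer coefficients and the statement is independent of the extended Bloch group formalism.)

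A second, repairable flaw: your determination of the constant via $\imaginary F=0$ does not work as stated, because $\imaginary\li(z)=D(z)-\log|z|\arg(1-z)\neq D(z)$ and because the explicit $\log\cdot\log$ correction terms also contribute to $\imaginary F$; the identity (\ref{vol}) accounts for neither contribution, so $\imaginary F$ does not ``collapse precisely'' to it. Fortunately this detour is unnecessary. The parameter space --- say coordinates $(t_1,\ldots,t_4)\in(\mathbb{C}\setminus\{0,1\})^4$ with the finitely many hypersurfaces where some $u_j\in\{0,1,\infty\}$ removed --- is connected, being the complement of a proper algebraic subvariety in $\mathbb{C}^4$; so a single evaluation, e.g.\ at $t_1=\cdots=t_4=i$, $u_1=\cdots=u_4=i$, $u_5=-1$, fixes the real and imaginary parts of the constant simultaneously. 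Note that you use this connectedness implicitly but never state it; without it, evaluating at one point proves nothing elsewhere, so it must be made explicit in any completed version of your argument.
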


\begin{proof} See the proof of Lemma 5.1 in \cite{Cho13b}.

\end{proof}

Let $\bold w\in\mathcal{T}$ and $\bold z\in\mathcal{S}$ be the corresponding pair in Lemma \ref{lem61}.
To prove 
\begin{equation}\label{goal2}
V_0(\bold{z})\equiv W_0(\bold{w})\modulos,
\end{equation} 
we consider the two cases of the crossing with parameters $z_a,\ldots,z_d,w_j,\ldots,w_m$ in Figure \ref{label}.

For the case of Figure \ref{label}(a), 
we let $t_1=\frac{z_b}{z_a}$, $t_2=\frac{z_c}{z_b}$, $t_3=\frac{z_d}{z_c}$, $t_4=\frac{z_a}{z_d}$,
$u_1=\frac{w_m}{w_j}$, $u_2=\frac{w_k}{w_j}$, $u_3=\frac{w_k}{w_l}$, $u_4=\frac{w_m}{w_l}$ and $u_5=\frac{w_j w_l}{w_k w_m}$
so that (\ref{54term}) satisfies. 
Then the potential function of a crossing defined in Figure \ref{potentialV} is expressed by
$$V_P(z_a,\ldots,z_d):=\li(t_1)-\li(\frac{1}{t_2})+\li(t_3)-\li(\frac{1}{t_4}),$$
and the potential function of a positive crossing defined in Figure \ref{pic2}(a) is expressed by
\begin{eqnarray*}
\lefteqn{W_P(w_j,w_k,w_l,w_m)}\\
&&=\li(u_1)+\li(u_2)-\li(\frac{1}{u_3})-\li(\frac{1}{u_4})+\li(u_5)-\frac{\pi^2}{6}+\log u_1\log u_2.
\end{eqnarray*}
Using Lemma \ref{lem62}, we can calculate
\begin{eqnarray}
V_{P0}-W_{P0}&\equiv&-(\log w_j-\log w_m)\log z_a-(\log w_k-\log w_j)\log z_b\nonumber\\
&&+(\log w_k-\log w_l)\log z_c+(\log w_l-\log w_m)\log z_d\modulos.\label{eq44}
\end{eqnarray}
(The details are in (41)--(42) and the following paragraphs of Section 5 in \cite{Cho13b}.
Note that, in \cite{Cho13b}, we denoted $V_P$ and $W_P$ by $X(z_a,\ldots,z_d)$ and $P_1(w_j,\ldots,w_m)$ respectively.)

For the case of Figure \ref{label}(b), 
we let $t_1=\frac{z_a}{z_d}$, $t_2=\frac{z_b}{z_a}$, $t_3=\frac{z_c}{z_b}$, $t_4=\frac{z_d}{z_c}$,  
$u_1=\frac{w_l}{w_m}$, $u_2=\frac{w_j}{w_m}$, $u_3=\frac{w_j}{w_k}$, $u_4=\frac{w_l}{w_k}$ and $u_5=\frac{w_k w_m}{w_j w_l}$
so that (\ref{54term}) satisfies. 
Then the potential function of a crossing defined in Figure \ref{potentialV} is expressed by
$$V_N(z_a,\ldots,z_d):=\li(t_1)-\li(\frac{1}{t_2})+\li(t_3)-\li(\frac{1}{t_4}),$$
and the potential function of a negative crossing defined in (\ref{W_N}) is expressed by
\begin{eqnarray*}
\lefteqn{W_N(w_j,w_k,w_l,w_m)}\\
&&=\li(u_1)-\li(\frac{1}{u_2})-\li(\frac{1}{u_3})+\li({u_4})-\li(\frac{1}{u_5})+\frac{\pi^2}{6}-\log u_2\log u_3.
\end{eqnarray*}
Using Lemma \ref{lem62}, we can calculate
\begin{eqnarray}
V_{N0}-W_{N0}&\equiv&-(\log w_j-\log w_m)\log z_a-(\log w_k-\log w_j)\log z_b\nonumber\\
&&+(\log w_k-\log w_l)\log z_c+(\log w_l-\log w_m)\log z_d\modulos.\label{eq45}
\end{eqnarray}

Note that the right-hand sides of (\ref{eq44}) and (\ref{eq45}) coincide. We can deduce the general rule of
these equations using Figure \ref{final}.

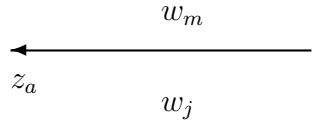
\begin{figure}[h]
\centering
\begin{picture}(4,2)\thicklines
   \put(4,1){\vector(-1,0){4}}
   \put(2,1.4){$w_m$}
   \put(2,0.2){$w_j$}
   \put(0,0.5){$z_a$}
  \end{picture}
  \caption{Side assigned by $z_a$}\label{final}
\end{figure}

For the side with $z_a$ in Figure \ref{final}, when it goes out of a crossing, 
the contribution to (\ref{eq44}) or (\ref{eq45}) of the crossing is
$$-(\log w_j-\log w_m)\log z_a,$$
and when it goes into a crossing, the contribution is
$$+(\log w_j-\log w_m)\log z_a.$$
Therefore, if we consider the whole crossings of the link diagram, 
the right-hand sides of (\ref{eq44}) or (\ref{eq45}) at all crossings are cancelled out
and we obtain (\ref{goal2}). This completes the proof of Theorem \ref{thm2}.

\section{Example of the twist knots}\label{sec7}

In this section, we apply Theorem \ref{thm2} to the example of the twist knot in Section 6 of \cite{Cho13a}
and show several numerical results. For the calculations, we assume the principal branch of logarithm.
Also we use the definition of $W_N$ in Figure \ref{pic2}(b).

Let $T_n$ ($n\geq 1$) be the twist knot with $n+3$ crossings in Figure \ref{twistknots}. 
For example, $T_1$ is the figure-eight knot $4_1$ and $T_2$ is the $5_2$ knot.
We follow the orientations in Figure \ref{twistknots}.

\begin{figure}[h]\centering
\subfigure[$n$ is odd]{\includegraphics[scale=0.85]{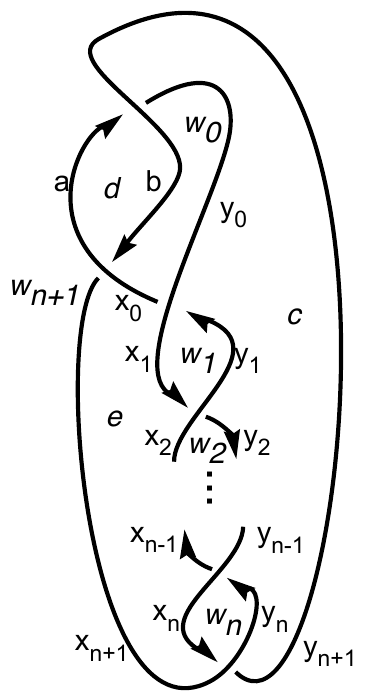}}\hspace{1cm}
\subfigure[$n$ is even]{\includegraphics[scale=0.85]{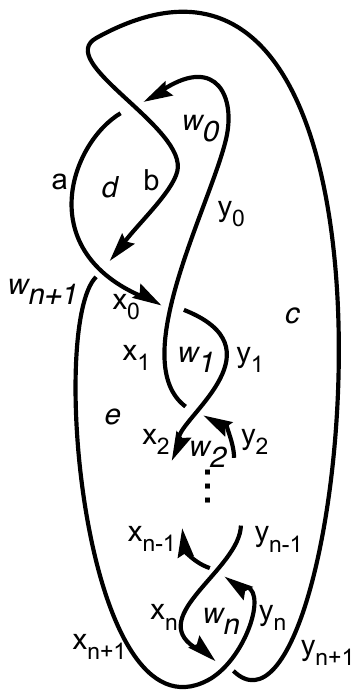}}
 \caption{Twist knot $T_n$}\label{twistknots}
\end{figure}

We assign variables $a, b, x_0,\ldots,x_{n+1},y_0,\ldots,y_{n+1}$ to the sides 
and $c, d, e, w_0,\ldots, w_{n+1}$ to the regions of Figure \ref{twistknots} respectively.
Let
\begin{eqnarray*}
A_k:=\li(\frac{c}{w_k})+\li(\frac{c}{w_{k+1}})-\li(\frac{c\,e}{w_k w_{k+1}})-\li(\frac{w_k}{e})-\li(\frac{w_{k+1}}{e})
+\frac{\pi^2}{6}-\log\frac{w_{k}}{e}\log\frac{w_{k+1}}{e},\\
B_k:=\li(\frac{e}{w_k})+\li(\frac{e}{w_{k+1}})-\li(\frac{c\,e}{w_k w_{k+1}})-\li(\frac{w_k}{c})-\li(\frac{w_{k+1}}{c})
+\frac{\pi^2}{6}-\log\frac{w_{k}}{c}\log\frac{w_{k+1}}{c},
\end{eqnarray*}
for $k=0,1,\ldots,n$. If $n$ is odd, the potential function $W(T_n;c,d,e,w_0,\ldots,w_{n+1})$ of Figure \ref{twistknots}(a) is
\begin{eqnarray*}
\lefteqn{W(T_n;c,d,e,w_0,\ldots,w_{n+1})}\\
&&=\left\{-\li(\frac{w_{n+1}}{c})-\li(\frac{w_{n+1}}{d})+\li(\frac{w_0 w_{n+1}}{c\,d})
+\li(\frac{c}{w_0})+\li(\frac{d}{w_0})\right.\\
&&~~~~\left.-\frac{\pi^2}{6}+\log\frac{c}{w_0}\log\frac{d}{w_0}\right\}\\
&&~~+\left\{-\li(\frac{w_0}{d})-\li(\frac{w_0}{e})+\li(\frac{w_0 w_{n+1}}{d\,e})
+\li(\frac{d}{w_{n+1}})+\li(\frac{e}{w_{n+1}})\right.\\
&&\left.~~~~-\frac{\pi^2}{6}+\log\frac{d}{w_{n+1}}\log\frac{e}{w_{n+1}}\right\}\\
&&~~+\sum_{k=0}^{(n-1)/2}\left(A_{2k}+B_{2k+1}\right),
\end{eqnarray*}
and if $n$ is even, the potential function $W(T_n;c,d,e,w_0,\ldots,w_{n+1})$ of Figure \ref{twistknots}(b) is
\begin{eqnarray*}
\lefteqn{W(T_n;c,d,e,w_0,\ldots,w_{n+1})}\\
&&=\left\{\li(\frac{c}{w_{0}})+\li(\frac{c}{w_{n+1}})-\li(\frac{c\,d}{w_0 w_{n+1}})
-\li(\frac{w_0}{d})-\li(\frac{w_{n+1}}{d})\right.\\
&&~~~~~\left.+\frac{\pi^2}{6}-\log\frac{w_{0}}{d}\log\frac{w_{n+1}}{d}\right\}\\
&&~~+\left\{\li(\frac{d}{w_{0}})+\li(\frac{d}{w_{n+1}})-\li(\frac{d\,e}{w_0 w_{n+1}})
-\li(\frac{w_0}{e})-\li(\frac{w_{n+1}}{e})\right.\\
&&~~~~~\left.+\frac{\pi^2}{6}-\log\frac{w_{0}}{e}\log\frac{w_{n+1}}{e}\right\}\\
&&~~+B_0+\sum_{k=1}^{n/2}\left(A_{2k-1}+B_{2k}\right).
\end{eqnarray*}

In Section 6 of \cite{Cho13a}, we chose $(a,b,x_0,\ldots,x_{n+1},y_0,\dots, y_{n+1})$ by
\begin{eqnarray*}
a=2,~b=-1,~x_0=t, ~y_0=1+\frac{2}{t},~
x_1=\frac{t(t+2)}{t^2-4t+8},~y_1=\frac{4}{t},\\
x_{k+1}=\frac{x_k y_k}{-x_{k-1}+x_k+y_k},~y_{k+1}=x_k+y_k-\frac{x_k y_k}{y_{k-1}},~x_{n+1}=3,~y_{n+1}=1,
\end{eqnarray*}
where $k=1,\ldots,n-1$, and $t$ is a solution of the defining equation in Table \ref{table1}.
All the solutions $t$ of the defining equation determine the solutions in $\mathcal{S}$
and the corresponding representation
$$\rho(T_n)(t):\pi_1(\mathbb{S}^3\backslash T_n)\longrightarrow{\rm PSL}(2,\mathbb{C}).$$

\begin{table}[h]
{\begin{tabular}{|c|c|}\hline
  $n$ & Defining equation of $t$\\\hline
  1 & $16-12t+3t^2=0$\\\hline
  2 & $-64+80t-40t^2+7t^3=0$\\\hline
  3 & $256-448t+336t^2-120t^3+17t^4=0$\\\hline
  4 & $-2048+4608t-4608t^2+2464t^3-696t^4+82t^5=0$\\\hline
  5 & $4096-11264t+14080t^2-9984t^3+4192t^4-980t^5+99t^6=0$\\\hline
\end{tabular}\caption{Defining equation of $t$ for $n=1,\dots,5$}\label{table1}}
\end{table}

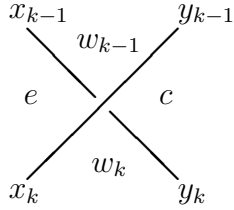
\begin{figure}[h]
\centering
\begin{picture}(6,5)  
  \setlength{\unitlength}{0.5cm}\thicklines
    \put(6,5){\line(-1,-1){4}}
    \put(2,5){\line(1,-1){1.8}}
    \put(4.2,2.8){\line(1,-1){1.8}}
    \put(3.7,1.2){$w_k$}
    \put(5.5,3){$c$}
    \put(3.3,4.5){$w_{k-1}$}
    \put(1.9,3){$e$}
    \put(1.5,5.3){$x_{k-1}$}
    \put(6,5.3){$y_{k-1}$}
    \put(1.5,0.5){$x_k$}
    \put(6,0.5){$y_k$}
  \end{picture}\caption{The ($k+2$)-th crossing for $k=1,\ldots,n+1$}\label{crossing k+2}
    \end{figure}

Using the equations (\ref{u1}) and (\ref{u2}), we can express $(c,d,e,w_0,\ldots,w_{n+1})$ in terms of $t$. 
Specifically, the ($k+2$)-th crossing (in the order from top to bottom) in Figure \ref{twistknots} becomes Figure \ref{crossing k+2}
and it determines
$$\frac{e}{w_k}=\left(\frac{y_k}{x_k}\right)'\left(\frac{x_k}{x_{k-1}}\right)'',$$
for $k=1,\ldots,n+1$. The first crossing in Figure \ref{twistknots} gives an equation of $c$
$$\frac{c}{w_{n+1}}=\left(\frac{a}{y_{n+1}}\right)'\left(\frac{y_{n+1}}{y_0}\right)''=\frac{2}{t}.$$
The second crossing in Figure \ref{twistknots} gives more simple equation of $w_{n+1}$
$$
\frac{e}{w_{n+1}}=\left(\frac{x_{n+1}}{a}\right)'\left(\frac{x_0}{x_{n+1}}\right)''=-\frac{2(t-3)}{t},
$$ and other equations of $d$ and $w_0$
$$\frac{d}{w_{n+1}}=\left(\frac{x_{n+1}}{a}\right)'\left(\frac{a}{b}\right)''=-3,
~\frac{e}{w_0}=\left(\frac{b}{x_0}\right)'\left(\frac{x_0}{x_{n+1}}\right)''=\frac{t-3}{t+1}.$$

Therefore, after choosing $e=1$, we can express $(c,d,e,w_0,\ldots,w_{n+1})$ in terms of $t$ by
\begin{eqnarray*}
c=-\frac{1}{t-3},~d=\frac{3t}{2(t-3)},~e=1,~w_0=\frac{t+1}{t-3},~
w_k=\left(\frac{x_k}{y_k}\right)''\left(\frac{x_{k-1}}{x_k}\right)',
\end{eqnarray*}
for $k=1,\ldots,n+1$. The exact expression of $w_k$ for $k=1,\ldots,6$ is in Table \ref{table2}.

\begin{table}[h]
{\begin{tabular}{|c|c|}\hline
  $k$ & $w_k$\\\hline
  0 & $(1+t)/(-3+t)$\\\hline
  1 & $-(16+t^2)/((-3+t)t^2)$\\\hline
  2 & $(256-256t+112t^2 -16t^3 -3t^4 +t^5)/((-3+t)t^4)$\\\hline
  3 & $(-4096+8192t-7424t^2 +3584t^3 -864t^4 +32t^5 +27t^6 -4t^7)/((-3+t)t^6)$\\\hline
  4 &$(65536-196608t+274432t^2 -225280t^3 +115456t^4 -35584t^5 +5152t^6$\\
     &$+320t^7 -231t^8 +25t^9)/((-3+t)t^8)$\\\hline
  5 & $(-1048576+4194304t-7929856t^2 + 9175040t^3 -7094272t^4 +3760128t^5$\\
     &$-1337088t^6+287232t^7 -21232t^8 -6048t^9 +1751t^{10} -144t^{11})/((-3+t)t^{10})$\\\hline
  6 &$(16777216-83886080t+200278016t^2 -298844160t^3 +307822592t^4$\\
     &$-228524032t^5 +123846656t^6 - 48324608t^7 +12842496t^8 -1930752t^9 $\\
     &$- 2544t^{10} +66288t^{11} -12587t^{12} +841t^{13})/((-3+t)t^{12})$\\\hline
\end{tabular}\caption{Expressions of $w_k$ in terms of $t$ for $k=1,\ldots,6$}\label{table2}}
\end{table}

For the solutions $t$ of the defining equations, the numerical values of the corresponding optimistic limits 
$$W_0(T_n)(t)\equiv i(\vol(\rho(T_n)(t))+i\,\cs(\rho(T_n)(t)))\modulo,$$
for $n=1,\ldots,5,$ are in Table \ref{table3}. Note that these values exactly coincide with 
the optimistic limits of Kashaev invariants in Table 3 of \cite{Cho13a}.

\begin{table}[h]
{\begin{tabular}{|c|l|l|}\hline
  $n$ &\hspace{2cm} $t$ &$W_0(T_n)(t)\equiv i(\vol(\rho(T_n)(t))+i\,\cs(\rho(T_n)(t)))$\\\hline
  1 & $t=2+1.1547...i$ & $i(2.0299...+0\,i)$\\
    &$t=2-1.1547...i$ & $i(-2.0299...+0\,i)$\\\hline
  2 & $t=1.4587...+1.0682...i$ & $i(2.8281...+3.0241...i)$\\
    & $t=1.4587...-1.0682...i$ & $i(-2.8281...+3.0241...i)$\\
    & $t=2.7969...$ &$i(0-1.1135...i)$\\\hline
  3 & $t=1.2631...+1.0347...i$ & $i(3.1640...+6.7907...i)$\\
    & $t=1.2631...-1.0347...i$ & $i(-3.1640...+6.7907...i)$\\
    & $t=2.2664...+0.7158...i$ & $i(1.4151...+0.2110...i)$\\
    & $t=2.2664...-0.7158...i$ &$i(-1.4151...+0.2110...i)$\\\hline
  4 & $t=1.1713...+1.0202...i$ & $i(3.3317...+10.9583...i)$\\
    & $t=1.1713...-1.0202...i$ & $i(-3.3317...+10.9583...i)$\\
    & $t=1.8097...+0.9073...i$ &$i(2.2140...+1.8198...i)$\\
    & $t=1.8097...-0.9073...i$ & $i(-2.2140...+1.8198...i)$\\
    & $t=2.5257...$ & $i(0-0.8822...i)$\\\hline
  5 & $t=1.1208...+1.0129...i$ & $i(3.4272...+15.3545...i)$\\
    & $t=1.1208...-1.0129...i$ & $i(-3.4272...+15.3545...i)$\\
    & $t=1.5498...+0.9676...i$ & $i(2.6560...+4.6428...i)$\\
    & $t=1.5498...-0.9676...i$ &$i(-2.6560...+4.6428...i)$\\
    & $t=2.2789...+0.4876...i$ & $i(1.1087...-0.2581...i)$\\
    & $t=2.2789...-0.4876...i$ & $i(-1.1087...-0.2581...i)$\\\hline
\end{tabular}\caption{Values of $W_0(T_n)(t)$ for $n=1,\ldots,5$}\label{table3}}
\end{table}

\appendix
\section{Change on the signs of the variables}

In this appendix, we show that the change on the signs of the variables of the potential function does not have an effect on 
{the set of equations $\mathcal{I}$ and the optimistic limit. Note that this property will be used in the author's later article.}

Let $W(w_1,\ldots,w_n)$ be the potential function defined in Section \ref{sec2}.
Let $\tau_1,\ldots,\tau_n,\epsilon_1,\ldots,\epsilon_n\in\{-1,1\}$ be fixed signs and define another potential function

\begin{equation*}
\widetilde{W}(w_1,\ldots,w_m):=W(\tau_1 w_1^{\epsilon_1},\ldots,\tau_n w_n^{\epsilon_n}).
\end{equation*}
In the same way, we define 
\begin{equation*}
\widetilde{\mathcal{I}}:=\left\{\left.\exp\left(w_k\frac{\partial \widetilde{W}}{\partial w_k}\right)=1\right|k=1,\ldots,n\right\}
\end{equation*}
and $\widetilde{\mathcal{T}}$ be the solution set of $\widetilde{\mathcal{I}}$.
Also, for ${\bold w}=(w_1,\ldots,w_n)$, define
$$\widetilde{\bold w}:=(\tau_1 w_1^{\epsilon_1},\ldots,\tau_n w_n^{\epsilon_n}).$$

\begin{pro} There is a one-to-one correspondence between ${\bold w}\in{\mathcal{T}}$ and 
$\widetilde{\bold w}\in\widetilde{\mathcal{T}}$.
Furthermore, we have
\begin{equation}\label{eq45}
\widetilde{W}_0(\widetilde{\bold w})\equiv W_0({\bold w})~~({\rm mod}~2\pi^2).
\end{equation}
\end{pro}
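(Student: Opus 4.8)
The plan is to exploit that $w_k\frac{\partial}{\partial w_k}$ is logarithmic differentiation, which transforms only by a factor $\pm1$ under the monomial substitution $w_k\mapsto\tau_k w_k^{\epsilon_k}$. Set $v_k:=\tau_k w_k^{\epsilon_k}$, so that $\widetilde{W}(\mathbf{w})=W(v_1,\ldots,v_n)$. Since $w_k\frac{\partial v_k}{\partial w_k}=\epsilon_k v_k$, the chain rule yields
\[
w_k\frac{\partial\widetilde{W}}{\partial w_k}=\epsilon_k\left(v_k\frac{\partial W}{\partial v_k}\right),
\]
evaluated with $v=\widetilde{\mathbf{w}}$. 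As $\epsilon_k=\pm1$, raising to the $\epsilon_k$-th power preserves the equation $\exp(\cdot)=1$, so a point solves $\widetilde{\mathcal{I}}$ exactly when its image solves $\mathcal{I}$. The substitution $\mathbf{w}\mapsto\widetilde{\mathbf{w}}$ is moreover an involution on $(\mathbb{C}^{*})^{n}$ (one checks $\tau_k(\tau_k w_k^{\epsilon_k})^{\epsilon_k}=w_k$ for $\epsilon_k=\pm1$), so this is precisely the asserted one-to-one correspondence between $\mathcal{T}$ and $\widetilde{\mathcal{T}}$, with inverse given by the same formula.

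Next I would compute $\widetilde{W}_0(\widetilde{\mathbf{w}})$ from the definition (\ref{defW_0}). Using the involution, $\widetilde{W}(\widetilde{\mathbf{w}})=W(\mathbf{w})$, and the displayed identity evaluated at $\widetilde{\mathbf{w}}$ gives $\widetilde{w}_k\frac{\partial\widetilde{W}}{\partial w_k}(\widetilde{\mathbf{w}})=\epsilon_k\left(w_k\frac{\partial W}{\partial w_k}\right)(\mathbf{w})$. Choosing the branch $\log\widetilde{w}_k=\log\tau_k+\epsilon_k\log w_k$ with $\log\tau_k\in\{0,\pi i\}$ and the same $\log w_k$ as in $W_0(\mathbf{w})$, and using $\epsilon_k^{2}=1$, the terms $\epsilon_k^{2}(w_k\frac{\partial W}{\partial w_k})\log w_k$ reassemble exactly into $W_0(\mathbf{w})$, leaving
\[
\widetilde{W}_0(\widetilde{\mathbf{w}})=W_0(\mathbf{w})-\sum_{k=1}^{n}\epsilon_k\left(w_k\frac{\partial W}{\partial w_k}\right)\log\tau_k .
\]

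Finally I would show the correction sum vanishes modulo $2\pi^{2}$. Because $\mathbf{w}\in\mathcal{T}$, each $w_k\frac{\partial W}{\partial w_k}$ is an integer multiple of $2\pi i$ by the defining equations (\ref{defH}) of $\mathcal{I}$, while $\log\tau_k$ is $0$ or $\pi i$. Hence every summand is either $0$ or of the form $\pm(2\pi i\,m)(\pi i)=\mp2\pi^{2}m$, so the whole sum lies in $2\pi^{2}\mathbb{Z}$, giving $\widetilde{W}_0(\widetilde{\mathbf{w}})\equiv W_0(\mathbf{w})~({\rm mod}~2\pi^{2})$. The main point requiring care is exactly this last step: the asserted congruence is modulo $2\pi^{2}$, which is strictly finer than the modulo $4\pi^{2}$ branch-invariance of Lemma \ref{branch}, so I cannot merely invoke branch independence. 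Instead I must use that $\log\tau_k$ is a \emph{half}-integer multiple of $2\pi i$ together with the integrality of $\frac{1}{2\pi i}w_k\frac{\partial W}{\partial w_k}$; any residual branch ambiguity $2\pi i N_k$ in $\log\widetilde{w}_k$ only contributes $(2\pi i\,m)(2\pi i N_k)\in4\pi^{2}\mathbb{Z}\subset2\pi^{2}\mathbb{Z}$ and is therefore harmless.
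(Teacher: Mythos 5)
Your proposal is correct and follows essentially the same route as the paper: the chain-rule identity $w_k\frac{\partial \widetilde{W}}{\partial w_k}\big|_{\widetilde{\bold w}}=\epsilon_k\, w_k\frac{\partial W}{\partial w_k}$ giving the bijection, the trivial identity $\widetilde{W}(\widetilde{\bold w})=W({\bold w})$, and the observation that the correction terms $\epsilon_k\left(w_k\frac{\partial W}{\partial w_k}\right)\log\tau_k$ lie in $2\pi^2\mathbb{Z}$ because $w_k\frac{\partial W}{\partial w_k}\in 2\pi i\,\mathbb{Z}$ on $\mathcal{T}$ while $\log\tau_k\in\{0,\pi i\}$. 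Your explicit remarks on the involution property and on why the congruence modulo $2\pi^2$ (rather than $4\pi^2$) is both meaningful and attainable are slightly more careful than the paper's terse presentation, but they formalize the same argument.
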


\begin{proof} At first, we show
$\widetilde{\bold w}\in\widetilde{\mathcal{T}}$ for each ${\bold w}\in{\mathcal{T}}$. Note that
$$w_k\frac{\partial }{\partial w_k}\li\left(\frac{\tau_k w_k^{\epsilon_k}}{\tau_j w_j^{\epsilon_j}}\right)=
\epsilon_k\cdot \log\left(1-\frac{\tau_k w_k^{\epsilon_k}}{\tau_j w_j^{\epsilon_j}}\right)$$
implies
\begin{equation*}
w_k\frac{\partial }{\partial w_k}\li\left(\frac{\tau_k w_k^{\epsilon_k}}{\tau_j w_j^{\epsilon_j}}\right)\Big|_{{\bold w}=\widetilde{\bold w}}
=\epsilon_k\cdot\log\left(1-\frac{w_k}{w_j}\right)
=\epsilon_k\cdot w_k\frac{\partial }{\partial w_k}\li\left(\frac{w_k}{w_j}\right),
\end{equation*}
where $|_{{\bold w}=\widetilde{\bold w}}$ means the evaluation of the equation at $\widetilde{\bold w}$.
Using these kinds of calculations, we obtain
\begin{equation}\label{eq46}
w_k\frac{\partial \widetilde{W}}{\partial w_k}\Big|_{{\bold w}=\widetilde{\bold w}}=\epsilon_k\cdot w_k\frac{\partial {W}}{\partial w_k},
\end{equation}
which shows $\widetilde{\bold w}\in\widetilde{\mathcal{T}}$ and the coincidence of $\mathcal{I}$ and $\widetilde{\mathcal{I}}$.
Therefore there is a one-to-one correspondence between $\mathcal{T}$ and $\widetilde{\mathcal{T}}$.

Note that $\widetilde{W}(\widetilde{\bold w})=W({\bold w})$ holds trivially.
For ${\bold w}\in{\mathcal{T}}$, the value of (\ref{eq46}) is zero module $2\pi i$. 
Therefore, (\ref{eq45}) follows from
\begin{equation*}
(w_k\frac{\partial \widetilde{W}}{\partial w_k})\log w_k\Big|_{{\bold w}=\widetilde{\bold w}}
\equiv\epsilon_k\cdot(w_k\frac{\partial {W}}{\partial w_k})\log (\tau_k w_k^{\epsilon_k})
\equiv(w_k\frac{\partial {W}}{\partial w_k})\log w_k~~({\rm mod}~2\pi^2).
\end{equation*}

\end{proof}

We finally remark that the same result holds for the potential function
of the Kashaev invariant in Section \ref{sec5} by the exactly same arguments. 

\vspace{5mm}
\begin{ack}
  He appreciates Yuichi Kabaya, Hyuk Kim and Seonhwa Kim for discussions and suggestions on this work.
\end{ack}

\bibliography{VolConj}
\bibliographystyle{abbrv}

{
\begin{flushleft}
  Department of Mathematics, Pohang Mathematics Institute(PMI), Gyungbuk 790-784, Republic of Korea\\
  \vspace{0.4cm}
E-mail: dol0425@gmail.com\\
\end{flushleft}}
\end{document}